\def\cal#1{\mathcal{#1}}
\def\cal{\mathcal}
\newcommand{\comment}[1]{}
\newcommand{\ind}{{\bf 1}}
\def\indd#1{{\ind}_{\{#1\}}}
\def\indn#1{\{#1_n\}_{n\in\N}}
\newcommand{\proba}{\mathbb P}
\newcommand{\esp}{{\mathbb E}}
\newcommand{\defe}{\mathrel{\mathop:}=}
\newcommand{\cov}{{\rm{Cov}}}
\newcommand{\var}{{\rm{Var}}}
\newcommand{\calA}{{\cal A}}
\newcommand{\calB}{{\cal B}}
\newcommand{\filF}{{\cal F}}
\newcommand{\calG}{{\cal G}}
\newcommand{\calN}{{\cal N}}
\def\B{{\mathbb B}}
\def\G{{\mathbb G}}
\def\T{{\mathbb T}}
\newcommand{\eqnh}{\begin{eqnarray*}}
\newcommand{\eqne}{\end{eqnarray*}}
\newcommand{\eqnhn}{\begin{eqnarray}}
\newcommand{\eqnen}{\end{eqnarray}}
\newcommand{\equh}{\begin{equation}}
\newcommand{\eque}{\end{equation}}
\def\summ#1#2#3{\sum_{#1 = #2}^{#3}}
\def\prodd#1#2#3{\prod_{#1 = #2}^{#3}}
\def\sif#1#2{\sum_{#1=#2}^\infty}
\def\topp#1{^{(#1)}}
\def\nn#1{{\left\|#1\right\|}}
\def\snn#1{\|#1\|}
\def\abs#1{\left|#1\right|}
\def\sabs#1{|#1|}
\def\ccbb#1{\left\{#1\right\}}
\def\spp#1{(#1)}
\def\pp#1{\left(#1\right)} 
\def\bb#1{\left[#1\right]}
\def\mmid{\;\middle\vert\;}
\def\ip#1{\left\langle#1\right\rangle}
\def\floor#1{\left\lfloor #1 \right\rfloor}
\def\vv#1{{\boldsymbol #1}}
\def\mand{\mbox{ and }}
\def\qmand{\quad\mbox{ and }\quad}
\def\qmwith{\quad\mbox{ with }\quad}
\def\mfa{\mbox{ for all }}
\def\mmas{\mbox{ as }}
\def\wt#1{\widetilde{#1}}
\def\weakto{\Rightarrow}
\def\limn{\lim_{n\to\infty}}
\def\limm{\lim_{m\to\infty}}
\def\limsupn{\limsup_{n\to\infty}}
\def\liminfn{\liminf_{n\to\infty}}
\def\Z{{\mathbb Z}}
\def\Zd{{\mathbb Z^d}}
\def\R{{\mathbb R}}
\def\Rd{{\mathbb R^d}}
\def\N{{\mathbb N}}
\newtheorem{Thm}{Theorem}[section]
\newtheorem{Lem}[Thm]{Lemma}
\newtheorem{Prop}[Thm]{Proposition}
\newtheorem{Coro}[Thm]{Corollary}
\newtheorem{Assumption}[Thm]{Assumption}
\theoremstyle{definition}
\newtheorem{Rem}[Thm]{Remark}
\newtheorem{Example}[Thm]{Example}
\numberwithin{equation}{section}
\title[Weighted Bernoulli random fields]{Limit theorems for weighted Bernoulli random fields under Hannan's condition}
\author[Klicnarov\'a]{Jana Klicnarov{\'a}}
\address{
Jana Klicnarov\'a,
Faculty of Economics, University of South Bohemia, Studentsk\'a 13, 370 05 \v Cesk\'e Bud\v ejovice, 
Czech Republic.
}
\email{opravdova.kacka@email.cz}
\author[Voln\'y]{Dalibor Voln{\'y}}
\address
{
Dalibor Voln\'y,
Laboratoire de Math\'ematiques Rapha\"el Salem,
Universit\'e de Rouen, 
76801, Saint Etienne du Rouvray, France.
}
\email{dalibor.volny@univ-rouen.fr}
\author[Wang]{Yizao Wang}
\address
{
Yizao Wang,
Department of Mathematical Sciences,
University of Cincinnati,
2815 Commons Way,
Cincinnati, OH, 45221-0025.
}
\email{yizao.wang@uc.edu}
\begin{document}\sloppy
\begin{abstract}Recently, invariance principles for partial sums of Bernoulli random fields over rectangular index sets have been proved under Hannan's condition. In this note we complement previous results by establishing limit theorems for weighted Bernoulli random fields, including central limit theorems for partial sums over arbitrary index sets and invariance principles for Gaussian random fields. Most results improve earlier ones on Bernoulli random fields under  Wu's condition, which is stronger than Hannan's condition.
\end{abstract}
\maketitle

\section{Introduction}

We are interested in limit theorems for partial sums of weighted stationary random fields $\{X_j\}_{j\in\Zd}$, in form of
\equh\label{eq:1}
S_n = \sum_{j\in\Zd}b_{n,j}X_j, n\in\N,
\eque
where $\{b_{n,j}\}_{j\in\Zd}$ are coefficients such that $\sum_jb_{n,j}^2<\infty$. We will impose further conditions on the dependence of $\{X_j\}_{j\in\Zd}$ so that $S_n$ is well defined in the $L^2$ sense.

Limit theorems for partial sums of dependent random variables have a long history. In the case $d=1$, limit theorems for stationary sequences have been extensively developed. The most considered case is the unweighted case with $b_{n,j} = \indd{j\in\{1,\dots,n\}}$, which yields
\[
S_n = X_1+\cdots+X_n.
\] In this case, various conditions on the weak dependence of stationary sequence $\{X_j\}_{j\in\Z}$ have been developed. Heuristically, when  the sequence of $\{X_j\}_{j\in\Z}$ is said to have weak dependence, $S_n$ should behave asymptotically as partial sums of i.i.d.~random variables; in particular one expects
\equh\label{eq:WIPd=1}
\ccbb{\frac{S_{\floor {nt}}}{\sqrt n}}_{t\in[0,1]}\weakto\sigma\{\B_t\}_{t\in[0,1]}
\eque
for a standard Brownian motion $\B$ and some constant $\sigma>0$. Results of this type are referred to as (weak) invariance principles or functional central limit theorems. See for examples \citep{dedecker00functional,maxwell00central,merlevede06recent,wu05nonlinear,peligrad97central,peligrad06central} and references therein for recent developments. In case that $b_{n,j}$ is a more general $\ell^2$ sequence, the same rational applies: $S_n$ in~\eqref{eq:1} with weakly dependent $\{X_j\}_{j\in\Z}$ should behave as if $\{X_j\}_{j\in\Z}$ are independent and identically distributed (i.i.d.) random variables. Different types of invariance principles may arise, see for example \citep{dedecker11invariance} where the limiting processes are fractional Brownian motions.

A few conditions have been known that lead to the above invariance principle~\eqref{eq:WIPd=1}. See for example \citep{durieu08comparison,durieu09independence} on comparisons between different conditions. In this paper we focus on Hannan's condition~\citep{hannan73central}  and Wu's condition~\citep{wu05nonlinear}, and particular their extensions to random fields. In order to compare with our results for random fields, assume in addition that the stationary random variables $\{X_j\}_{j\in\Z}$ have the form
\[
X_j = f\circ T^j(\{\epsilon_k\}_{k\in\Z})
\]
for some measurable function
 $f:\R^\Z\to\R$ and the shift operator $T$ on $\R^\Z$ defined by $[T_j(w)]_k = w_{j+k}$ for $w = \{w_k\}_{k\in\Z}\in\R^\Z$, and a sequence of stationary random variables $\{\epsilon_k\}_{k\in\Z}$. We assume $\esp X_j = 0$ and $\esp X_j^2<\infty$. Furthermore, introduce $\filF_j := \sigma(\epsilon_k:k\leq j)$, $\filF_{-\infty} := \bigcap_j\filF_j$ and $\filF_\infty := \bigvee_j\filF_j$, and assume $f$ to be regular in the sense that $\esp (f\mid\filF_{-\infty}) = 0, \esp (f\mid\filF_\infty) = f$. 
 With these notations, Hannan's condition reads as
 \equh\label{eq:Hannand=1}
 \sum_{j\in\Z}\snn{\esp(f\circ T^j\mid\filF_0)- \esp(f\circ T^j\mid \filF_{-1})}_2<\infty.
 \eque
To introduce Wu's condition, furthermore set first $\epsilon^*_j = \epsilon_j$ for $j\neq 0$ and $\epsilon^*_0$ to be an independent copy of $\epsilon_0$ and independent of all the other random variables, and then set $X_j^* = f\circ T^j(\{\epsilon^*_k\}_{k\in\Z})$. Then Wu's condition reads as
\equh\label{eq:Wud=1}
\sum_{j\in\Z}\snn{X_j - X^*_j}_2<\infty.
\eque

Both conditions have been extensively investigated (e.g.~\citep{cuny13quenched,dedecker07weak,wu11asymptotic}). 
It has been shown in \citep{volny14invariance} that 
Wu's condition is strictly stronger than Hannan's condition, in the sense that 
one can find an example such that the invariance principle as in~\eqref{eq:WIPd=1} holds for Hannan's condition, but Wu's condition is violated.  On the other hand, Wu's condition is very practical in proving limit theorems~\citep{wu11asymptotic}, and conditions of similar types have lead to strong invariance principles \citep{wu07strong,berkes14komlos}.

Limit theorems for stationary random fields ($d\ge2$) have also been investigated since long time ago.
There is a vast literature on limit theorems for general stationary random fields, and we refer to \citep{bolthausen82central,bradley07introduction,dedecker98central,dedecker01exponential} and the references therein. A main motivation of the recent developments was to extend the well-investigated dependence conditions for stationary sequences to random fields. 
However, the success so far has been mostly limited to Bernoulli random fields $\{X_j\}_{j\in\Zd}$: 
\equh\label{eq:Bernoulli}
X_j = f\circ T_j(\ccbb{\epsilon_k}_{k\in\Zd})
\eque
where $f:\R^\Zd\to\R$ is a measurable function, $\{T_j\}_{j\in\Zd}$ are the shift operators on $\R^\Zd$ such that for $w = \{w_k\}_{k\in\Zd}\in\R^\Zd$, $[T_j(w)]_k = w_{j+k}$ for all $k,j\in\Zd$, and $\{\epsilon_k\}_{k\in\Zd}$ are i.i.d.~random variables. We assume $\esp X_j = 0$ and $\esp X_j^2<\infty$. 

It is important to point out that in the case $d=1$, $\{\epsilon_j\}_{j\in\Z}$ is only assumed to be stationary. At the technical level, the reason for restricting $\{\epsilon_j\}_{j\in\Zd}$ to be i.i.d.~when $d\ge2$ is the following. Conditional expectations of random variables, in the form of $\esp (Y\mid\filF_j)$ with $\filF_j = \sigma(\epsilon_k:k\leq j, k\in\Zd)$, are involved in the proof.  Such conditional expectations are extensively used to express random variables of interest in terms of sums of orthogonal random variables. For example in one dimension, one can write
\equh\label{eq:telescoping}
Y = \sum_{j\in\Z}\bb{\esp(Y\mid\filF_j)-\esp(Y\mid\filF_{j-1})} =: \sum_jP_jY
\eque
for any regular random variable $Y$ with finite moment, and for all $j\neq j'$, $P_jY$ and $P_{j'}Y$ are uncorrelated. To extend this key decomposition to the random-field setup (Lemma~\ref{lem:L2} below) and maintain the orthogonality of the terms, one needs the following commuting property of the filtration:
\equh\label{eq:commuting}
\esp[\esp(Y\mid\filF_j)\mid\filF_k] = \esp[\esp(Y\mid\filF_k)\mid\filF_j] = \esp(Y\mid\filF_{j\wedge k})
\eque
with $(j\wedge k)_q = \min(j_q,k_q), q=1,\dots,d$. This identity, unfortunately, is not true for filtrations generated by general stationary $\{\epsilon_j\}_{j\in\Zd}$ except when they are i.i.d. Therefore, many arguments based on the telescoping~\eqref{eq:telescoping} fail to be generalized to high dimensions for arbitrary stationary $\{\epsilon_j\}_{j\in\Zd}$. 

A deeper reason for working under the assumption of $\{\epsilon_j\}_{j\in\Zd}$ being i.i.d.~is that the so-constructed random fields as in~\eqref{eq:Bernoulli} can be approximated by $m$-dependent random fields, but it is not clear how to construct an $m$-dependent approximation for general stationary random fields. For general stationary sequences in one dimension, instead of $m$-dependent approximation one can proceed alternatively by establishing martingale approximation, using the projections based on conditional expectations, and then apply the martingale central limit theorem. However, the martingale central limit theorem has been well known (e.g.~\citep{bolthausen82central}) to be much less practical in high dimensions than in one dimension, and limit theorems for stationary random fields established via martingale approximations have very stringent conditions \citep{basu79functional,nahapetian95billingsley,poghosyan98invariance,morkvenas84invariance}. So for general stationary random fields in form of~\eqref{eq:Bernoulli} with stationary $\{\epsilon_j\}_{j\in\Zd}$, it is still an open question that what should be the general approach to establish central limit theorems. 
This is the limitation of most of the results so far on weighted Bernoulli random fields. It might be true that the situation with stationary $\{\epsilon_j\}_{j\in\Zd}$ is actually much more complicated than in one dimension. An exceptional result for general stationary random fields is due to \citet{dedecker01exponential} via a very involved Lindeberg method, and the condition on weak dependence is much more complicated than the one-dimensional case. Another very recent result that goes beyond the i.i.d.~regime is in~\citep{volny15central}, although it is still not yet as  satisfactory as in one dimension. 

As discussed above, latest advances on limit theorems for stationary random fields have been mostly limited to Bernoulli random fields in form of~\eqref{eq:1} and~\eqref{eq:Bernoulli}. In particular, progress has been made on Wu's condition in the past few years. Observe that Wu's condition in one dimension~\eqref{eq:Wud=1} can be naturally extended to high dimensions, formally as
\equh\label{eq:Wu}
\sum_{j\in\Zd}\snn{X_j-X_j^*}_2<\infty,
\eque
with $X_j^* = f\circ T_j(\{\epsilon_k^*\}_{k\in\Zd})$ and $\{\epsilon_k^*\}_{k\in\Zd}$ similarly as in the case $d=1$. 
This extension is first considered by \citet{elmachkouri13central}, where invariance principles for unweighted partial sums are established. Limit theorems have been also established for fractional Brownian sheets \citep{wang14invariance} and set-indexed random fields \citep{bierme14invariance,elmachkouri13central}. All these results can be formulated as limit theorems for weighted Bernoulli random fields as in~\eqref{eq:1}, under Wu's condition and certain assumptions on $b_{n,j}$. Recent results on  Bernoulli random fields under other conditions include also~\citep{elmachkouri14orthomartingale}.

In this paper, we consider limit theorems for Bernoulli random fields under Hannan's condition (\eqref{eq:Hannan} below), and continue the development in \citep{volny14invariance}. It is proved in \citep{volny14invariance} that for stationary Bernoulli random field under Hannan's condition, 
\[
\ccbb{\frac1{n^{d/2}}\sum_{\vv 1\leq i\leq nt}X_i}_{t\in[0,1]^d}\weakto \sigma\{\B_t\}_{t\in[0,1]^d}
\]
as $n\to\infty$ in $D([0,1]^d)$, 
where the limiting random field is the Brownian sheet $\B$ up to a multiplicative constant $\sigma$, as in the case for $\{X_j\}_{j\in\Zd}$ being i.i.d. 
Here, we complement results in \citep{volny14invariance} by considering more general weights $b_{n,j}$: we extend a few of aforementioned results \citep{bierme14invariance, elmachkouri13central, wang14invariance} on limit theorems for weighted Bernoulli random fields under Wu's conditions to the strictly weaker Hannan's condition (except in one case where the results are not comparable; see Remark~\ref{rem:H}).

There are two key ingredients in the proofs here. One is the assumption on the i.i.d.~random variables discussed above: in particular, this assumption allows the approximation of the stationary random fields by $m$-dependent ones. The other is a moment inequality for weighted partial sums, in form of
\[
\nn{\sum_{j\in\Zd}b_{n,j}X_j}_p\leq C\pp{\sum_{j\in\Zd}b_{n,j}^2}^{1/2}
\]
for some $p\geq 2$. 
We establish such an inequality in Lemma~\ref{lem:1} under Hannan's condition. It plays the key role in bounding the error term in the $m$-dependent approximation of the random fields.  Such an inequality has been known under Wu's condition~\eqref{eq:Wu} \citep[Proposition 1]{elmachkouri13central}. (See also for a different extension of Wu's condition  proposed by \citep{truquet10moment}, where a similar moment inequality was established for the unweighted partial sums.) 
Our proof of the main result, Theorem~\ref{thm:1}, makes essential use of the two keys, and the proof is inspired  by \citet{bierme14invariance} (see also Remark~\ref{rem:BD}). Here we present a variation of the same idea, using $m$-dependent approximation instead of $m_n$-dependent approximation.

The paper is organized as follows. The main result, Theorem~\ref{thm:1}, is established in Section~\ref{sec:2}. As consequences, we present two applications. First, central limit theorems for partial sums over arbitrary index sets are investigated in Section~\ref{sec:3}. Second, invariance principles in \citep{bierme14invariance,elmachkouri13central,wang14invariance} are established under the Hannan's condition in Section~\ref{sec:4}.

\section{A central limit theorem}\label{sec:2}

Consider i.i.d.~random variables $\{\epsilon_i\}_{i\in\Zd}$ defined in a probability space $(\Omega,\calB,\proba)$. 
Set 
$\filF_i = \sigma(\epsilon_j:j\in\Zd, j\leq i), i\in\Zd$ and  $\filF_{i_q}\topp q = \sigma(\epsilon_j:j\in\Zd,j_q\leq i_q), q=1,\dots,d, i_q\in\Z$. Because of the independence of $\{\epsilon_j\}_{j\in\Zd}$, $\{\filF_j\}_{j\in\Zd}$ are commuting in the sense of~\eqref{eq:commuting}. Next, as in \citep{volny14invariance}, introduce the projection operator
\[
P_i = \prodd q1d P\topp q_{i_q} \qmwith P_{i_q}\topp q (\cdot) = \esp(\cdot\mid\filF_{i_q}\topp q) - \esp(\cdot\mid\filF_{i_q-1}\topp q).
\]
In this way, $P_{i_q}\topp q$ and $P_i$ are commuting operators from $L^2(\Omega,\calB,\proba)$ to $L^2(\Omega,\calB,\proba)$, due to the commuting property of the filtration. For more properties of these filtrations and operators, see \citep{khoshnevisan02multiparameter,wang14invariance}. The following decomposition based on these projection operators is useful.
\begin{Lem}\label{lem:L2}
Let $Y$ be a random variable measurable with respect to the $\sigma$-algebra $\filF_\infty = \sigma(\epsilon_j:j\in\Zd)$, with $\esp Y = 0, \esp |Y|^p<\infty$, for some $p\geq 2$. Then, 
\[
Y = \sum_{j\in\Zd}P_jY \defe \limm\sum_{j\in\{-m,\dots,m\}^d}P_jY \mbox{ in $L^p$.}
\]
\end{Lem}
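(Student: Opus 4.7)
The plan is to factor the cubical partial sum into a product of one-dimensional telescoping operators, analyze each factor separately, and then combine the limits. First, using that the one-directional projections $P_{j_q}^{(q)}$ mutually commute (by the commuting property \eqref{eq:commuting} of the filtration, which relies on $\{\epsilon_j\}_{j\in\Zd}$ being i.i.d.) and telescoping the sum in each coordinate, I would write, for every $m\ge1$,
\[
\sum_{j \in \{-m,\ldots,m\}^d} P_j Y \;=\; \prod_{q=1}^d \bpp{\sum_{j_q=-m}^{m} P_{j_q}^{(q)}} Y \;=\; \prod_{q=1}^d A_m^{(q)} Y,\qquad A_m^{(q)} := \E(\cdot \mid \filF_m^{(q)}) - \E(\cdot \mid \filF_{-m-1}^{(q)}).
\]
Each $A_m^{(q)}$ is a bounded operator on $L^p$ with $\nn{A_m^{(q)}}_p \le 2$, uniformly in $m$, and the factors commute.

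Next, I would show that $A_m^{(q)} Y \to Y$ in $L^p$ for every fixed $q$. Since $\filF_m^{(q)} \uparrow \filF_\infty$ as $m\to\infty$ and $Y$ is $\filF_\infty$-measurable with $\E|Y|^p < \infty$, Doob's martingale $L^p$ convergence theorem ($p>1$) yields $\E(Y\mid\filF_m^{(q)}) \to Y$ in $L^p$. The reverse martingale $L^p$ convergence theorem yields $\E(Y\mid\filF_{-m-1}^{(q)}) \to \E(Y\mid\calT^{(q)})$ in $L^p$, where $\calT^{(q)} := \bigcap_{m\ge0} \filF_{-m-1}^{(q)}$. To identify $\calT^{(q)}$ as trivial, I would regroup the i.i.d.\ variables $\{\epsilon_j\}_{j\in\Zd}$ into the slices $Y_k := (\epsilon_j)_{j_q=k}$, $k\in\Z$, which form an i.i.d.\ sequence of $\R^{\Z^{d-1}}$-valued random elements; then $\filF_{-m-1}^{(q)} = \sigma(Y_k : k\le -m-1)$, so Kolmogorov's 0-1 law applied to $(Y_k)_{k\in\Z}$ gives that $\calT^{(q)}$ is $\proba$-trivial, and the limit is $\E Y = 0$.

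Finally, I would conclude by induction on $d$ via the algebraic identity
\[
\prod_{q=1}^d A_m^{(q)} - I \;=\; \sum_{q=1}^d (A_m^{(q)} - I)\prod_{r > q} A_m^{(r)}.
\]
The base case $d=1$ is exactly the convergence $A_m^{(1)}Y\to Y$ in $L^p$ established above. For the inductive step, set $Z_m^{(q)} := \prod_{r>q} A_m^{(r)} Y$ and split $(A_m^{(q)}-I) Z_m^{(q)} = (A_m^{(q)}-I)(Z_m^{(q)}-Y) + (A_m^{(q)}-I) Y$; using $\nn{A_m^{(q)}}_p\le 2$, each summand is bounded by $3\,\nn{Z_m^{(q)}-Y}_p + \nn{(A_m^{(q)}-I)Y}_p$, where the first term vanishes by the inductive hypothesis applied in the lower dimension $d-q$ and the second vanishes by the preceding step. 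Summing over $q$ and taking $m\to\infty$ finishes the proof.

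The main obstacle is the triviality of the backward tail $\calT^{(q)}$: this is where the full strength of the i.i.d.\ assumption on $\{\epsilon_j\}_{j\in\Zd}$ is genuinely used (mere stationarity would not suffice), echoing the discussion in the introduction about why the restriction to Bernoulli random fields is necessary. The remaining ingredients — martingale $L^p$ convergence and the operator telescoping — are then essentially routine once uniform $L^p$-boundedness of the $A_m^{(q)}$'s has been recorded.
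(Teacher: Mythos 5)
Your proof is correct and follows essentially the same route as the paper's: both telescope the cubical sum into the product $\prod_{q=1}^d\big(\esp(\cdot\mid\filF_m\topp q)-\esp(\cdot\mid\filF_{-m-1}\topp q)\big)Y$, send the "all-forward" part to $Y$ by Doob's $L^p$ martingale convergence, and kill the remaining contributions using backward martingale convergence together with Kolmogorov's $0$--$1$ law applied to the i.i.d.\ coordinate slices. The only difference is presentational --- the paper expands the product into the $2^d$ corner terms $\esp(Y\mid\filF_{m\delta})$ and bounds each by the $L^p$ contractivity of conditional expectation, while you peel off the factors one at a time by an operator-telescoping induction; the two bookkeeping schemes are equivalent.
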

\begin{proof}
By definition of $P_j$, 
\[
\sum_{j\in\{-m+1,\dots,m\}^d}P_jY = \esp(Y\mid\filF_{m\ind}) + \sum_{\delta\in\{-1,1\}^d\setminus\{\ind\}}(-1)^{\ip\delta}\esp(Y\mid\filF_{m\delta})
\]
with $\ip\delta = \summ q1d \indd{\delta_q=-1}$, $\ind = (1,\dots,1)\in\Zd$ and $m\delta ,m\ind\in\Zd$. By martingale convergence theorem, $\esp(Y\mid\filF_{m\ind})\to Y$ almost surely and in $L^p$. The other $2^d-1$ terms all converge to zero in $L^p$. Indeed, observe that for each $q=1,\dots,d$, 
\[
\limm\esp(Y\mid\filF_{-m}\topp q) = \esp\pp{Y\mmid\bigcap_{m\in\N}\filF_{-m}\topp q}
\]
almost surely and in $L^p$, by backwards martingale convergence theorem. By Kolmogorov's zero-one law, the limit is a constant and hence necessarily zero since $\esp Y = 0$. To complete the proof, it suffices to observe for each $\delta\in\{-1,1\}^d\setminus\{\vv1\}$ with $\delta_q=-1$ for some $q\in\{1,\dots,d\}$, $\snn{\esp(Y\mid\filF_{m\delta})}_p\leq \snn{\esp(Y\mid\filF\topp q_m)}_p$ for $j\in\Zd$.
\end{proof}
In view of Lemma~\ref{lem:L2}, throughout, an infinite sum of random variables is understood as the limit of partial sums over sequence of finite sets in the $L^p$ sense.

With projection operators defined above, the Hannan's condition states as
\equh\label{eq:Hannan}
\Delta_p(X) \defe \sum_{i\in\Zd}\nn{P_{\vv 0}X_i}_p<\infty,
\eque
for some $p\geq 2$.
We first give two lemmas on Bernoulli random fields under Hannan's condition. \begin{Lem}\label{lem:1}
Suppose $\Delta_p(X)<\infty$ for some $p\geq 2$. Then for all $\{a_i\}_{i\in\Zd}\in\ell^2(\Zd)$, 
\equh\label{eq:weighted}
\nn{\sum_{i\in\Zd}a_iX_i}_p\leq C_{p,d} \pp{\sum_{i\in\Zd}a_i^2}^{1/2}\Delta_p(X)
\eque
with $C_{p,d} = (p-1)^{d/2}$.
\end{Lem}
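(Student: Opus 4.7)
The plan is to combine a multi-parameter extension of Rio's martingale $L^p$ inequality with a convolution estimate. Assume first that $(a_i)_{i\in\Zd}$ has finite support; the extension to general $(a_i)\in\ell^2$ follows by an $L^p$-Cauchy argument using the very bound under proof. Set $S\defe\sum_i a_iX_i$. Lemma~\ref{lem:L2} applied to $S$ gives the decomposition $S=\sum_{j\in\Zd}P_jS$ in $L^p$, and by linearity $P_jS=\sum_i a_iP_jX_i$.

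The key one-dimensional ingredient is Rio's refinement of Burkholder's inequality: for $L^p$ martingale differences $(d_k)$ with $p\geq 2$,
$$\bnn{\sum_k d_k}_p^2\leq (p-1)\sum_k\|d_k\|_p^2.$$
Since $P_j=\prod_q P\topp q_{j_q}$ and the filtration $\{\filF_j\}_{j\in\Zd}$ commutes by~\eqref{eq:commuting}, the sum $\sum_{j\in\Zd}P_jS$ can be organized as $d$ nested one-dimensional sums: at the $q$-th stage, after freezing the coordinates $j_1,\dots,j_{q-1}$, the partial sum $\sum_{j_{q+1},\dots,j_d}P_jS$ lies in the image of $P\topp q_{j_q}$ and hence forms a martingale difference in the $q$-th direction. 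Applying Rio's inequality once per direction yields
$$\|S\|_p^2\leq (p-1)^d\sum_{j\in\Zd}\|P_jS\|_p^2.$$

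To close the argument, I would estimate $\|P_jS\|_p$ by Minkowski, $\|P_jS\|_p\leq \sum_i|a_i|\,\|P_jX_i\|_p$, and invoke stationarity of $\{X_i\}_{i\in\Zd}$ to rewrite $\|P_jX_i\|_p=\|P_{\vv 0}X_{i-j}\|_p$. Setting $b_k\defe\|P_{\vv 0}X_k\|_p$, the resulting double sum is $\||a|*\wt b\|_{\ell^2}^2$ with $\wt b_k=b_{-k}$, which by Young's convolution inequality $\|f*g\|_{\ell^2}\leq\|f\|_{\ell^2}\|g\|_{\ell^1}$ is at most $\|a\|_{\ell^2}^2\Delta_p(X)^2$. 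Combining the two estimates delivers~\eqref{eq:weighted} with $C_{p,d}=(p-1)^{d/2}$.

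The main obstacle I expect is a careful bookkeeping of the $d$-fold iteration of Rio's inequality---specifically, verifying at each stage that after freezing some coordinates and summing out others, what remains is still a genuine martingale difference array in the next direction. This hinges on the commuting identity~\eqref{eq:commuting}, and hence on the i.i.d.\ assumption on $\{\epsilon_k\}_{k\in\Zd}$; everything else (Minkowski, stationarity, Young) is routine.
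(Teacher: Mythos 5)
Your proposal is correct and follows essentially the same route as the paper: reduction to finite support, the $P_j$-decomposition from Lemma~\ref{lem:L2}, a $d$-fold iteration of Rio's inequality using the commuting projections to get the constant $(p-1)^{d/2}$, and then triangle inequality plus stationarity; your final appeal to Young's convolution inequality $\ell^2 * \ell^1 \to \ell^2$ is exactly the Cauchy--Schwarz computation the paper carries out explicitly. No gaps.
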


\begin{proof}
Observe that it suffices to show 
\[
\nn{\sum_{i\in\Lambda}a_iX_i}_p\leq C_{p,d}\pp{\sum_{i\in\Lambda}a_i^2}^{1/2}\Delta_p(X)
\]
for all finite $\Lambda\subset\Zd$. Then, by Lemma~\ref{lem:L2},
\[
\sum_{i\in\Lambda}a_iX_i = \sum_{i\in\Lambda}a_i\sum_{j\in\Zd}P_jX_i = \sum_{j\in\Zd}P_j\pp{\sum_{i\in\Lambda}a_iX_i}.
\]
Now let $Y$ be any random variable with $\esp |Y|^p<\infty$ in the same probability space, and recall that $P_j = \prodd q1dP_{j_q}\topp q$. Then,
\begin{align}
\nn{\sum_{j\in\{-m,\dots,m\}^d}P_jY}^2_p  &  = \nn{\sum_{j_1 = -m}^m P_{j_1}\pp{\sum_{j_2,\dots,j_d\in\{-m,\dots,m\}}\prodd q2d P_{j_q}Y}}_p^2\nonumber\\
& \leq (p-1)\summ {j_1}{-m}m\nn{P_{j_1}\pp{\sum_{j_2,\dots,j_d\in\{-m,\dots,m\}}\prodd q2d P_{j_q}Y}}_p^2\label{eq:MZ}\\
& = (p-1)\summ {j_1}{-m}m\nn{\sum_{j_2,\dots,j_d\in\{-m,\dots,m\}}\prodd q2d P_{j_q}(P_{j_1}Y)}_p^2\nonumber,
\end{align}
where in the first and last equalities above we used the commuting property of the projection operators, and the inequality is a Marcinkiewicz--Zygmund type inequality for one-dimensional martingales due to \citet[Theorem 2.1]{rio09moment}.
Iterating the same argument we arrive at, for all $m\in\N$, 
\equh\label{eq:MZd}
\nn{\sum_{j\in\{-m,\dots,m\}^d}P_jY}_p^2 \leq C_{p,d}^2\sum_{j\in\{-m,\dots,m\}^d}\nn{P_jY}_p^2.
\eque
Taking $Y = \sum_{i\in\Lambda}a_iX_i$, we obtain for all $m\in\N$,
\begin{align*}
\nn{\sum_{j\in\{-m,\dots,m\}^d}P_j\pp{\sum_{i\in\Lambda}a_iX_i}}_p & \leq C_{p,d}^2\sum_{j\in\{-m,\dots,m\}^d}\nn{P_j\pp{\sum_{i\in\Lambda}a_iX_i}}_p^2\\
& \leq C_{p,d}^2 \sum_{j\in\Zd}\pp{\sum_{i\in\Lambda}|a_i|\nn{P_{\vv0}X_{i-j}}_p}^2\\
& \leq C_{p,d}^2\sum_{j\in\Zd}\sum_{i\in\Lambda}a_i^2\nn{P_{\vv0}X_{i-j}}_p\sum_{\ell\in\Lambda}\nn{P_{\vv0}X_{\ell-j}}_p \\
& = C_{p,d}^2\Delta_p^2(X)\sum_{i\in\Lambda}a_i^2,
\end{align*}
where we applied triangle inequality and Cauchy--Schwarz inequality in the second and third steps, respectively.
Thus, we have shown~\eqref{eq:weighted}.
\end{proof}
\begin{Rem}\label{rem:Cp}
It is not clear to us whether the constant $C_{p,d} = (p-1)^{d/2}$ is optimal for $d\ge 2$. It is proved by \citet{rio09moment} that it is optimal when $d=1$. The constant $C_{p,d}$ will play a role when establishing tightness with entropy conditions for invariance principles. See Remark~\ref{rem:H} below.
\end{Rem}
\begin{Lem}\label{lem:2}
Suppose $\Delta_2(X)<\infty$. Then $\sum_{j\in\Zd}|\cov(X_{\vv0},X_j)|\leq \Delta_2^2(X)<\infty$.
\end{Lem}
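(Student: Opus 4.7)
The plan is to expand both $X_{\vv 0}$ and $X_j$ into their orthogonal decompositions via Lemma~\ref{lem:L2}, exploit that $\{P_k Y\}_{k \in \Zd}$ are pairwise orthogonal in $L^2$, and then apply Cauchy--Schwarz together with stationarity to recognize the factor $\Delta_2(X)$ twice.

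Concretely, I would first write $X_{\vv 0} = \sum_{k\in\Zd} P_k X_{\vv 0}$ and $X_j = \sum_{k\in\Zd} P_k X_j$, both convergent in $L^2$ by Lemma~\ref{lem:L2}. Since each $P_k$ is an orthogonal projection onto a subspace, and these subspaces are mutually orthogonal for distinct $k$, taking the inner product and using $\esp X_{\vv 0}=\esp X_j=0$ gives
\[
\cov(X_{\vv 0}, X_j) = \esp[X_{\vv 0} X_j] = \sum_{k\in\Zd} \esp\bb{P_k X_{\vv 0}\cdot P_k X_j}.
\]
Then Cauchy--Schwarz on each summand yields
\[
|\cov(X_{\vv 0}, X_j)| \leq \sum_{k\in\Zd} \nn{P_k X_{\vv 0}}_2 \nn{P_k X_j}_2.
\]

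Next I would invoke stationarity of the Bernoulli random field: since the filtration $\{\filF_k\}$ is obtained by shifting $\filF_{\vv 0}$ and $X_j = f\circ T_j(\{\epsilon_k\})$, one has $\nn{P_k X_j}_2 = \nn{P_{\vv 0} X_{j-k}}_2$ (this identity is the standard shift-invariance used to even define $\Delta_2(X)$ as a single series indexed by $i$). Substituting and summing over $j\in\Zd$,
\[
\sum_{j\in\Zd} |\cov(X_{\vv 0}, X_j)| \leq \sum_{k\in\Zd}\nn{P_{\vv 0} X_{-k}}_2 \sum_{j\in\Zd}\nn{P_{\vv 0} X_{j-k}}_2,
\]
and after the change of variable $j \mapsto j-k$ in the inner sum both sums equal $\Delta_2(X)$, giving the desired bound $\Delta_2^2(X)$.

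There is no serious obstacle here; the only point demanding a bit of care is the justification that $\esp[P_k X_{\vv 0} \cdot P_{k'} X_j] = 0$ for $k\neq k'$, which follows because $P_k$ and $P_{k'}$ map into orthogonal subspaces of $L^2$ (a consequence of the commuting property~\eqref{eq:commuting} of the filtration $\{\filF_j\}_{j\in\Zd}$ under the i.i.d.~assumption on $\{\epsilon_j\}$, already exploited in the proof of Lemma~\ref{lem:1}). Once this orthogonality is in hand, the argument is a clean two-line application of Cauchy--Schwarz and stationarity.
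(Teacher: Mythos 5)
Your proof is correct and follows essentially the same route as the paper's: orthogonal expansion of $X_{\vv 0}$ and $X_j$ via the projections $P_k$, orthogonality of the ranges of $P_k$ for distinct $k$, Cauchy--Schwarz termwise, and stationarity $\nn{P_k X_j}_2 = \nn{P_{\vv 0}X_{j-k}}_2$ to identify the double sum as $\Delta_2^2(X)$. The only difference is that you spell out the stationarity/change-of-variables step that the paper leaves implicit in its final equality.
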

\begin{proof}
Hannan's condition enables to write $X_i = \sum_jP_jX_i$. Since $\{P_j\}_{j\in\Zd}$ are orthogonal in the sense that $\esp[(P_jX)(P_kY)] = 0$ for all $j,k\in\Zd,j\neq k$ and $X,Y\in L^2(\Omega,\calB,\proba)$, it follows that
\begin{multline*}
\sum_{k\in\Zd}|\esp(X_{\vv0}X_k)| \leq \sum_{k\in\Zd}\sum_{i\in\Zd}\esp|(P_iX_{\vv0})(P_iX_k)|\\
\leq \sum_{k\in\Zd}\sum_{i\in\Zd}\nn{P_iX_{\vv0}}_2\nn{P_iX_k}_2 = \Delta_2^2(X).
\end{multline*}
\end{proof}
As a consequence, we introduce
\equh\label{eq:sigma}
\sigma^2 \defe\sum_{j\in\Zd}\cov(X_{\vv0},X_j)
\eque
which is finite under Hannan's condition. 

To state the main result, introduce $\vec b_n = \{b_{n,j}\}_{j\in\Zd}\in\ell^2(\Zd)$, $b_n := (\sum_{j\in\Zd}b_{n,j}^2)^{1/2}$. 
For $\{\vec b_n\}_{n\in\N}\subset\ell^2(\Zd)$, we are interested in
\[
S_n = \sum_{j\in\Zd}b_{n,j}X_j,
\] 
which by Lemma~\ref{lem:1} is defined in the $L^2$ sense under $\Delta_2(X)<\infty$, and moreover for  $\sigma_n^2 := \var(S_n)$,
\equh\label{eq:varSn1}
\sigma_n^2 \leq Cb_n^2\Delta_2^2(X)<\infty, n\in\N
\eque
for some constant $C>0$.
Our main result is the following.
\begin{Thm}\label{thm:1}
Let $\{X_i\}_{i\in\Zd}$ be a stationary Bernoulli random field as in~\eqref{eq:Bernoulli} satisfying Hannan's condition~\eqref{eq:Hannan}. 
If
\equh\label{eq:supbnj}
\limn\sup_{j\in\Zd}\frac{|b_{n,j}|}{b_n} = 0
\eque
and
\equh\label{eq:liminf}
\liminfn\frac{\sigma_n^2}{b_n^2}>0
\eque
hold, then
\equh\label{eq:CLT}
\frac{S_n}{\sigma_n}\weakto\calN(0,1).
\eque
\end{Thm}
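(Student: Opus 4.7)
The proof strategy is an $m$-dependent approximation followed by a standard CLT for sums with local dependence, combined with uniform-in-$n$ error control via the moment inequality of Lemma~\ref{lem:1}.

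First, I would build the approximation. For each $m \in \N$ set
\[
X_j^{(m)} \defe \esp\bb{X_j \mmid \epsilon_k : \snn{k-j}_\infty \leq m},
\]
so that $\{X_j^{(m)}\}_{j \in \Zd}$ is a stationary Bernoulli field that is $(2m+1)$-dependent in the $\ell^\infty$-sense. Write $S_n^{(m)} = \sum_j b_{n,j} X_j^{(m)}$ and $\sigma_n^{(m)2} = \var(S_n^{(m)})$. The residual $\{X_j - X_j^{(m)}\}_{j\in\Zd}$ is again a Bernoulli shift of $\{\epsilon_k\}$, and since $P_{\vv 0}$ is an $L^2$-contraction and $X_j^{(m)} \to X_j$ in $L^2$, one has $\nn{P_{\vv 0}(X_j - X_j^{(m)})}_2 \to 0$ for each $j$ as $m \to \infty$, with summable dominating bound $2\nn{P_{\vv 0} X_j}_2$. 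Dominated convergence gives $\Delta_2(X - X^{(m)}) \to 0$, whence Lemma~\ref{lem:1} yields
\[
\nn{S_n - S_n^{(m)}}_2 \leq C_{2,d}\, b_n\, \Delta_2(X - X^{(m)})
\]
uniformly in $n$. Since $S_n$ is centered, $\sigma_n = \nn{S_n}_2$; the reverse triangle inequality combined with~\eqref{eq:liminf} then forces both $\nn{S_n - S_n^{(m)}}_2/\sigma_n \to 0$ and $\sigma_n^{(m)}/\sigma_n \to 1$ uniformly for large $n$ as $m \to \infty$.

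Second, for each fixed $m$ I would establish $S_n^{(m)}/\sigma_n^{(m)} \weakto \calN(0,1)$. This is a CLT for a weighted sum of a stationary, centered, $L^2$-bounded, $(2m+1)$-dependent random field whose weights satisfy the negligibility~\eqref{eq:supbnj}. I would prove it via a Bernstein block decomposition: partition $\Zd$ into large cubes of side $M_n$ separated by corridors of width $2m+1$. A random-shift argument (averaging over translates of the partition) ensures the corridor variance is $O(m/M_n) b_n^2$, negligible for $M_n \to \infty$; the contributions of distinct cubes are mutually independent by $(2m+1)$-dependence; and the $L^2$-mass of any single cube is at most $\sqrt{M_n^d} \sup_j |b_{n,j}|$, so choosing $M_n \to \infty$ slowly enough that $M_n^d \sup_j b_{n,j}^2 = o(b_n^2)$ --- possible exactly because of~\eqref{eq:supbnj} --- verifies the Lindeberg condition for Lindeberg--Feller.

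Finally, a double-limit argument combines the two steps: for any bounded Lipschitz $\varphi$,
\[
\babs{\esp\varphi(S_n/\sigma_n) - \esp\varphi(\calN(0,1))} \leq \frac{\nnLip\varphi \cdot \nn{S_n - S_n^{(m)}}_2}{\sigma_n} + \babs{\esp\varphi(S_n^{(m)}/\sigma_n) - \esp\varphi(\calN(0,1))},
\]
and the right-hand side vanishes on taking $\limsupn$ first (using the CLT of the second step together with $\sigma_n^{(m)}/\sigma_n \to 1$) and then $\limm$ (using the uniform $L^2$ error bound), proving~\eqref{eq:CLT}. The main obstacle is the CLT of the second step: calibrating the block scale $M_n$ so that the corridor variance is uniformly $o(\sigma_n^{(m)2})$ for arbitrary $\ell^2$-weighted sums is delicate, and the random-shift trick (or an alternative Stein-method argument for sums with local dependence) is what handles the case where the weights could otherwise concentrate in the corridors of any fixed partition.
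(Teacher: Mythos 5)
Your overall architecture coincides with the paper's: the same $(2m+1)$-dependent approximant $X_j^{(m)}=\esp(X_j\mid \epsilon_k:|k-j|_\infty\le m)$, the same use of Lemma~\ref{lem:1} plus dominated convergence to get $\sup_n\nn{S_n-S_n^{(m)}}_2/b_n\to0$ as $m\to\infty$, and the same double-limit closing step (the paper invokes Billingsley's Theorem 4.2 where you use bounded Lipschitz test functions; these are interchangeable). One small elision in your first part: the ``summable dominating bound $2\nn{P_{\vv0}X_j}_2$'' tacitly uses the commutation identity $P_{\vv0}X_j^{(m)}=\esp(P_{\vv0}X_j\mid\calG_j^{(m)})$, which the paper proves explicitly via conditional independence of the generating $\sigma$-algebras; the bare contraction bound $\snn{P_{\vv0}X_j^{(m)}}_2\le\snn{X_j}_2$ would not be summable. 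The only genuine divergence is the CLT for the fixed-$m$ approximant: the paper first passes to a finite index set $V_n$ carrying asymptotically all of the $\ell^2$-mass and then cites Heinrich's CLT for $m$-dependent triangular arrays, whose hypotheses reduce to a Lindeberg condition on the \emph{individual} summands $b_{n,j}X_j^{(m)}$ and are verified directly from \eqref{eq:supbnj}, \eqref{eq:liminf} and stationarity. You instead propose a self-contained Bernstein blocking with a randomly shifted partition; the corridor estimate and the independence of distinct blocks are fine, and this route would make the proof more self-contained.

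The gap is in your Lindeberg verification for the blocks. Knowing that each block sum $T_{n,i}$ satisfies $\nn{T_{n,i}}_2\le C M_n^{d/2}\sup_j|b_{n,j}|=o(s_n)$ gives only Feller negligibility, not the Lindeberg condition $\sum_i\esp[T_{n,i}^2\indd{|T_{n,i}|>\epsilon s_n}]=o(s_n^2)$: a uniformly small second moment does not control the contribution of the event $\{|T_{n,i}|>\epsilon s_n\}$ without some uniform integrability of the normalized squared blocks. To close this you need an additional truncation: split $X_j^{(m)}$ into a part bounded by $K$ (recentered) and a tail. For the bounded part each block is almost surely at most $2KM_n^{d}\sup_j|b_{n,j}|$, so choosing $M_n\to\infty$ slowly enough that $M_n^{d}\sup_j|b_{n,j}|=o(b_n)$ makes the Lindeberg sum vanish identically for large $n$; the tail part is a stationary $(2m+1)$-dependent field whose weighted sum has variance at most $(4m+3)^d\,b_n^2\,\esp[X_{\vv0}^{(m)2}\indd{|X_{\vv0}^{(m)}|>K}]$, which vanishes as $K\to\infty$ uniformly in $n$. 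This uniform-integrability input is exactly what Heinrich's theorem packages, and what the paper checks through the displayed Lindeberg sum whose threshold $\epsilon m^{-2d}\sigma_{m,V_n}/|b_{n,j}|$ tends to infinity by \eqref{eq:supbnj}. With that one addition your argument goes through.
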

The condition~\eqref{eq:liminf} is subtle as it involves both the coefficients and the dependence of underlying random fields (via $\sigma_n$). 
The following corollary is more convenient, as it imposes only conditions on coefficients. However, we see later in Example~\ref{example:1} that there are examples that satisfy the conditions in Theorem~\ref{thm:1}, but the conclusion of Corollary~\ref{coro:1} does not hold. 
Recall that for $k\in\Zd$, the shift operator yields $T_k\vec b_n = \{b_{n,j+k}\}_{j\in\Zd}$. Let $e_1,\dots,e_d$ be the $d$ canonical unit vector in $\Rd$.

\begin{Coro}\label{coro:1}
Let $\{X_i\}_{i\in\Zd}$ be a stationary Bernoulli random field as in~\eqref{eq:Bernoulli} satisfying Hannan's condition~\eqref{eq:Hannan}. Under the notations as in Theorem~\ref{thm:1},
if
\equh\label{eq:shift0}
\limn\frac{\snn{T_{e_q}\vec b_n - \vec b_n}_{\ell^2}}{b_n} = 0, \mfa q=1,\dots,d
\eque
 hold, then
\equh\label{eq:var}
\limn\frac{\sigma_n}{b_n} = \sigma
\eque
with $\sigma$ defined as in~\eqref{eq:sigma}, and  
\[
\frac{S_n}{b_n}\weakto\calN(0,\sigma^2).
\]
\end{Coro}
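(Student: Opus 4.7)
The plan is to reduce Corollary~\ref{coro:1} to Theorem~\ref{thm:1}. The shift-stability hypothesis~\eqref{eq:shift0} will be used to establish both (i) the variance asymptotic $\sigma_n^2/b_n^2\to\sigma^2$, which immediately yields~\eqref{eq:var} and, when $\sigma^2>0$, the liminf condition~\eqref{eq:liminf}; and (ii) the pointwise smallness~\eqref{eq:supbnj}. Once both are in hand, Theorem~\ref{thm:1} combined with Slutsky's lemma delivers the CLT when $\sigma^2>0$, while in the degenerate case $\sigma^2=0$ the variance shrinks and $S_n/b_n\to 0$ in $L^2$.

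For (i): let $\gamma(k)=\cov(X_{\vv 0},X_k)$ and $c_n(k)=\sum_j b_{n,j}b_{n,j+k}$. Stationarity rewrites $\sigma_n^2=\sum_k\gamma(k)\,c_n(k)$, while Lemma~\ref{lem:2} gives $\sum_k|\gamma(k)|\le\Delta_2^2(X)<\infty$. Cauchy--Schwarz yields $|c_n(k)|\le b_n^2$ and, applied to $b_n^2-c_n(k)=\sum_j b_{n,j}(b_{n,j}-b_{n,j+k})$, gives $|b_n^2-c_n(k)|\le b_n\snn{T_k\vec b_n-\vec b_n}_{\ell^2}$. Factoring $T_k=T_{e_1}^{k_1}\cdots T_{e_d}^{k_d}$, telescoping each $T_{e_q}^{k_q}-I$, and using that shifts are $\ell^2$-isometries produce $\snn{T_k\vec b_n-\vec b_n}_{\ell^2}\le\sum_q|k_q|\snn{T_{e_q}\vec b_n-\vec b_n}_{\ell^2}=o(b_n)$ for every fixed $k$, by~\eqref{eq:shift0}. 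Hence $c_n(k)/b_n^2\to 1$ for each $k$, and dominated convergence with majorant $|\gamma(k)|$ delivers $\sigma_n^2/b_n^2\to\sum_k\gamma(k)=\sigma^2$.

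Step (ii) is the technical heart of the argument, and the point I expect to be the main obstacle: passing from the $\ell^2$-smoothness hypothesis~\eqref{eq:shift0} to the pointwise smallness~\eqref{eq:supbnj}. I plan to argue by contradiction. If~\eqref{eq:supbnj} fails, some subsequence admits $j_n$ with $|b_{n,j_n}|\ge\delta b_n$ for a fixed $\delta>0$. The level set $A_n=\{j:|b_{n,j}|\ge(\delta/2)b_n\}$ has cardinality at most $4/\delta^2=:M$ by a Chebyshev bound applied to $\sum_j b_{n,j}^2=b_n^2$, so among the $M+1$ distinct indices $j_n+\ell e_1$ ($\ell=0,\dots,M$), at least one must exit $A_n$; let $k^*\in\{1,\dots,M\}$ be the smallest such $\ell$. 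Then $|b_{n,j_n}-b_{n,j_n+k^*e_1}|\ge(\delta/2)b_n$, and Cauchy--Schwarz on the telescoping sum gives
\[
\snn{T_{e_1}\vec b_n-\vec b_n}_{\ell^2}^2\ge\sum_{\ell=0}^{k^*-1}(b_{n,j_n+(\ell+1)e_1}-b_{n,j_n+\ell e_1})^2\ge\frac{(\delta/2)^2 b_n^2}{k^*}\ge\frac{\delta^2 b_n^2}{4M},
\]
contradicting~\eqref{eq:shift0}.

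To assemble the conclusion: when $\sigma^2>0$, steps (i) and (ii) verify the hypotheses of Theorem~\ref{thm:1}, yielding $S_n/\sigma_n\weakto\calN(0,1)$; Slutsky applied with $\sigma_n/b_n\to\sigma$ then gives $S_n/b_n\weakto\calN(0,\sigma^2)$. When $\sigma^2=0$, $\var(S_n/b_n)=\sigma_n^2/b_n^2\to 0$, so $S_n/b_n\to 0$ in $L^2$ and in distribution to the degenerate Gaussian $\calN(0,0)$. Everything outside step (ii) is mechanical once the covariance-series representation of $\sigma_n^2$ is combined with the telescoping estimate on shifts.
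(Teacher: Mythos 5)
Your proof is correct, and its overall architecture coincides with the paper's: establish $\sigma_n^2/b_n^2\to\sigma^2$ by dominated convergence over the covariance series, split into the degenerate case $\sigma=0$ (where $S_n/b_n\to0$ in $L^2$) and the case $\sigma>0$ (where \eqref{eq:liminf} holds and Theorem~\ref{thm:1} plus Slutsky finishes). The one place you genuinely diverge is the implication \eqref{eq:shift0} $\Rightarrow$ \eqref{eq:supbnj}. The paper first passes by Cauchy--Schwarz to the condition \eqref{eq:shift} on $\sum_j|b_{n,j+e_q}^2-b_{n,j}^2|$ and then invokes an external lemma (Lemma~8 of Bierm\'e--Durieu 2015, itself based on an idea of Peligrad--Utev) to deduce \eqref{eq:supbnj}; you instead give a direct, self-contained pigeonhole argument: the level set $\{j:|b_{n,j}|\geq(\delta/2)b_n\}$ has at most $\lceil 4/\delta^2\rceil$ elements, so a large coefficient must drop below $(\delta/2)b_n$ within a bounded number of unit steps in the $e_1$ direction, and Cauchy--Schwarz on the telescoped increments then forces $\snn{T_{e_1}\vec b_n-\vec b_n}_{\ell^2}\geq c(\delta)\,b_n$, contradicting \eqref{eq:shift0}. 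This checks out (take $M=\lceil 4/\delta^2\rceil$ so the count of $M+1$ indices is an integer) and has the merit of making the corollary independent of the cited reference. In the variance step you are also slightly more careful than the paper: you justify $\snn{T_k\vec b_n-\vec b_n}_{\ell^2}=o(b_n)$ for general fixed $k$ by telescoping unit shifts and using that shifts are $\ell^2$-isometries, whereas the paper asserts this directly from \eqref{eq:shift0}; your Cauchy--Schwarz bound $|b_n^2-c_n(k)|\leq b_n\snn{T_k\vec b_n-\vec b_n}_{\ell^2}$ replaces the paper's polarization identity, to the same effect.
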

\begin{proof}[Proof of Corollary~\ref{coro:1}]
We first show~\eqref{eq:var}. 
Recall~\eqref{eq:varSn1}. 
Observe that 
\[
-2\sum_{k\in\Zd} b_{n,k}b_{n,k+j} = \snn{T_j\vec b_n-\vec b_n}_{\ell^2}^2 - \snn{\vec b_n}_{\ell^2}^2 - \snn{T_j\vec b_n}_{\ell^2}^2,
\]
and $\snn{T_j\vec b_n-\vec b_n}_{\ell^2} = o(b_n)$ for all fixed $j\in\Zd$, a consequence of~\eqref{eq:shift0}.  Therefore,
\equh\label{eq:varSn2}
\limn\frac1{b_n^2}\sum_{k\in\Zd}b_{n,k}b_{n,k+j} = 1 \mfa j\in\Zd.
\eque
 Thus, by the dominated convergence theorem,~\eqref{eq:varSn1} and~\eqref{eq:varSn2} imply~\eqref{eq:var}.

If $\sigma = 0$, then $\sigma_n^2/b_n^2\to 0$, and the central limit theorem is degenerate and trivially holds. If $\sigma>0$, then~\eqref{eq:liminf} holds. 
By Cauchy--Schwarz inequality, ~\eqref{eq:shift0} implies
\equh\label{eq:shift}
\limn\frac1{b_n^2}\sum_{j\in\Zd}\abs{b_{n,j+e_q}^2-b_{n,j}^2} = 0, \mfa q=1,\dots,d.
\eque
It has been shown in \citep[Lemma 8]{bierme15invariance}, using an idea from \citep{peligrad97central}, that~\eqref{eq:shift} implies~\eqref{eq:supbnj}. 
 The desired result now follows from Theorem~\ref{thm:1}.
\end{proof}
\begin{Rem}\label{rem:redundant}
Condition~\eqref{eq:shift0} was introduced in \citet[Theorem 3.1]{bierme14invariance}. Condition~\eqref{eq:supbnj} was also assumed there. It has been pointed out in \citep[Remark 3]{bierme15invariance} that~\eqref{eq:supbnj} was redundant. 
\end{Rem}

\begin{proof}[Proof of Theorem~\ref{thm:1}]
We proceed an $m$-dependent approximation argument. 
For each $m\in\N$, set $\calG_j\topp m = \sigma(\epsilon_i: i\in\Zd, |j-i|_\infty\leq m)$, $X_j\topp m = \esp(X_j\mid\calG_j\topp m), j\in\Zd$. 
In this way, $\{X\topp m_j\}_{j\in\Zd}$ is a ($2m+1$)-dependent stationary random field.
Write
\[
S_n\topp m = \sum_{j\in\Zd}b_{n,j}X_j\topp m \qmand \sigma_{m,n}^2 = \var(S_n\topp m).
\]
Observe that \begin{multline*}\label{eq:P0Xjm}
P_{\vv0}X_j\topp m= \sum_{\delta\in\{0,1\}^d}(-1)^{\delta_1+\cdots+\delta_d}\esp\bb{\esp(X_j\mid\calG_j\topp m)\mmid\filF_{-\delta}}\\
= \sum_{\delta\in\{0,1\}^d}(-1)^{\delta_1+\cdots+\delta_d}\esp\bb{\esp(X_j\mid\filF_{-\delta})\mmid\calG_j\topp m} = \esp\pp{P_{\vv0}X_j\mmid\calG_j\topp m},
\end{multline*}
where in the second equality we used the fact that the $\sigma$-algebras $\calG_j\topp m$ and $\filF_\ell$ are conditionally independent and hence commuting, because they are generated by independent random variables $\{\epsilon_j\}_{j\in\Zd}$. Thus,
\equh\label{eq:deltapm}
\Delta_p(X\topp m)\leq \Delta_p(X),
\eque
and $S_n\topp m$ is well defined in the $L^p$ sense if $\Delta_p(X)<\infty, p\geq 2$. 

We will approximate $S_n$ by $S_n\topp m$. To establish a central limit theorem for $m$-dependent random variables, we will apply a result due to \citet{heinrich88asymptotic}, which requires each partial sum to be of finite number of random variables. Therefore, we introduce a finite set $V_n\subset\Zd$ for each $n$ such that $|V_n|\to\infty$ and $\limn b_n^{-2}\sum_{j\in V_n}b_{n,j}^2 = 1$. Set
\[
S_{V_n}\topp m = \sum_{j\in V_n}b_{n,j}X_j\topp m \qmand \sigma_{m,V_n}^2 = \var(S_{V_n}\topp m).
\]
We first summarize a few estimates in the following lemma. 

\begin{Lem}\label{lem:bounds}
With the construction described above, 
\equh\label{eq:SnSnm}
\limm\limsup_{n\in\N}\frac{\var(S_n-S_n\topp m)}{\sigma_n^2} = 0, \limm\limsup_{n\in\N}\frac{|\sigma_{m,n}^2-\sigma_n^2|}{\sigma_n^2} = 0,
\eque
and with the choice of $V_n$ described above, for every $m$ large enough,
\equh\label{eq:SnSVn}
\limn\frac{\var(S_n\topp m-S_{V_n}\topp m)}{\sigma_{m,n}^2} = 0, \limn\frac{\sigma_{m,V_n}^2}{\sigma_{m,n}^2} = 1.
\eque
\end{Lem}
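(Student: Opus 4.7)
The strategy is to reduce all four estimates to the weighted moment bound of Lemma~\ref{lem:1}, exploiting the fact that both $X\topp m$ and $X - X\topp m$ inherit Hannan's condition from $X$, and that $\Delta_2(X - X\topp m) \to 0$ as $m \to \infty$. The liminf assumption~\eqref{eq:liminf} then transfers bounds of order $o(b_n^2)$ into bounds of order $o(\sigma_n^2)$.

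The first preparatory step is to show $\Delta_2(X - X\topp m) \to 0$ as $m \to \infty$. From the commuting computation that yields~\eqref{eq:deltapm} one has
\[
P_{\vv 0}(X_j - X_j\topp m) = P_{\vv 0}X_j - \esp\bb{P_{\vv 0}X_j \mmid \calG_j\topp m},
\]
so each summand is bounded in $L^2$ by $2\snn{P_{\vv 0}X_j}_2$ and, for each fixed $j$, converges to $0$ as $m \to \infty$ by the martingale convergence theorem (since $\calG_j\topp m \uparrow \filF_\infty$). Hannan's condition~\eqref{eq:Hannan} together with dominated convergence delivers $\Delta_2(X - X\topp m) \to 0$. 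Applying Lemma~\ref{lem:1} to the field $X - X\topp m$ yields
\[
\var(S_n - S_n\topp m) \leq C_{2,d}^2\, b_n^2\, \Delta_2^2(X - X\topp m),
\]
and, since~\eqref{eq:liminf} provides a constant $c>0$ with $\sigma_n^2 \geq c\, b_n^2$ for all $n$ large enough, dividing gives the first half of~\eqref{eq:SnSnm}. For the second half I use the reverse triangle inequality $|\sigma_{m,n} - \sigma_n| \leq \snn{S_n - S_n\topp m}_2$, which gives
\[
\frac{|\sigma_{m,n}^2 - \sigma_n^2|}{\sigma_n^2} \leq \frac{2\snn{S_n - S_n\topp m}_2}{\sigma_n} + \pp{\frac{\snn{S_n - S_n\topp m}_2}{\sigma_n}}^2,
\]
and both terms vanish in the $\limm\limsup_n$ sense by the first half.

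For~\eqref{eq:SnSVn}, the bound~\eqref{eq:deltapm} combined with Lemma~\ref{lem:1} applied to $X\topp m$ yields
\[
\var(S_n\topp m - S_{V_n}\topp m) \leq C_{2,d}^2\, \Delta_2^2(X) \sum_{j \notin V_n} b_{n,j}^2 = o(b_n^2),
\]
by the defining property of $V_n$. The already established~\eqref{eq:SnSnm} allows me to fix $m$ large enough so that $\sigma_{m,n}^2 \geq \sigma_n^2 / 2 \geq (c/2)\, b_n^2$ for all sufficiently large $n$, giving the first half of~\eqref{eq:SnSVn}; the second half then follows from one more application of the reverse triangle inequality argument.

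The only delicate point, and what I would regard as the main obstacle, is the ordering of limits: we cannot afford to take $m \to \infty$ before uniformizing in $n$, so we must first produce estimates of the form $\var(S_n - S_n\topp m)/\sigma_n^2 \leq (\text{const}/c)\, \Delta_2^2(X - X\topp m)$ with the constants free of $n$. It is precisely the comparability $\sigma_n^2 \asymp b_n^2$ furnished by~\eqref{eq:liminf}, together with the $n$-free constants from Lemma~\ref{lem:1}, that makes this uniform control possible.
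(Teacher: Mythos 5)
Your proof is correct and follows essentially the same route as the paper: Lemma~\ref{lem:1} applied to $X-X\topp m$ and $X\topp m$, dominated convergence (with dominating sequence $2\snn{P_{\vv0}X_j}_2$) to get $\Delta_2(X-X\topp m)\to0$, and condition~\eqref{eq:liminf} to convert $o(b_n^2)$ bounds into $o(\sigma_n^2)$ bounds, with the key point about the order of limits handled exactly as in the paper. The only (equally valid) cosmetic differences are that you use the reverse triangle inequality in $L^2$ for the variance-difference estimates where the paper uses $|\sigma_{m,n}^2-\sigma_n^2|\leq \var^{1/2}(S_n\topp m-S_n)\var^{1/2}(S_n\topp m+S_n)$, and martingale convergence along $\calG_j\topp m\uparrow\filF_\infty$ where the paper uses the contraction $\snn{P_{\vv0}(X_j-X_j\topp m)}_2\leq\snn{X_j-X_j\topp m}_2$.
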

\begin{proof}[Proof of Lemma~\ref{lem:bounds}]
In the sequel, we let $C$ denote constant number independent from $n$ and $m$, but may change from line to line.
We first show the first part of~\eqref{eq:SnSnm}. 
Indeed, by Lemma~\ref{lem:1}, 
\[
\var(S_n - S_n\topp m) \leq Cb_n^2\pp{\sum_{j\in\Zd}\nn{P_{\vv0}(X_j-X_j\topp m)}_2}^2.
\]
Observe that for each $j$, $\snn{P_{\vv0}(X_j-X_j\topp m)}_2\leq \snn{X_j-X_j\topp m}_2\to 0$ as $m\to\infty$, and  that
\begin{multline*}
\sum_{j\in\Zd}\nn{P_{\vv0}(X_j-X_j\topp m)}_2\leq\sum_{j\in\Zd}\pp{\nn{P_{\vv0}(X_j)}_2+\snn{P_{\vv0}(X_j\topp m)}_2} \\
\leq  \Delta_2(X\topp m)+\Delta_2(X)\leq 2\Delta_2(X),
\end{multline*}
which is finite under Hannan's condition. By the dominated convergence theorem, $\limm\sup_{n\in\N}\var(S_n\topp m-S_n)/b_n^2 = 0$, and the first part of~\eqref{eq:SnSnm} follows from the assumption~\eqref{eq:liminf}. To see the second part, it suffices to observe
\[
\abs{\sigma_{m,n}^2-\sigma_n^2} \leq \var^{1/2}(S_n\topp m-S_n)\var^{1/2}(S_n\topp m+S_n).
\]
We have seen that $\sigma_n^2 \leq Cb_n^2$ in~\eqref{eq:varSn1}. Again by Lemma~\ref{lem:1} and~\eqref{eq:deltapm}, 
\[
\sigma_{m,n}^2\leq Cb_n^2\Delta_2^2(X\topp m)\leq Cb_n^2\Delta_2^2(X).
\]
Therefore, $\var(S_n\topp m+S_n)\leq 2(\sigma_{m,n}^2+\sigma_n^2)\leq Cb_n^2$, for all $m,n\in\N$. It then follows
\[
\limsupn\frac{\sabs{\sigma_{m,n}^2-\sigma_n^2}}{\sigma_n^2} \leq  C\limsupn \frac{b_n}{\sigma_n}\frac{\var^{1/2}(S_n\topp m-S_n)}{\sigma_n}.
\]
The second part of~\eqref{eq:SnSnm} now follows from the first part and~\eqref{eq:liminf}.

For~\eqref{eq:SnSVn}, to show the first part, using the same argument as above it suffices to observe 
\[
\frac{\var(S_n\topp m-S_{V_n}\topp m)}{\sigma_{m,n}^2}\leq C\frac{\sum_{j\notin V_n}b_{n,j}^2}{\sigma_{m,n}^2}\Delta^2_2(X\topp m)\leq C\frac{\sum_{j\notin V_n}b_{n,j}^2}{b_n^2}\frac{b_n^2}{\sigma_n^2}\frac{\sigma_n^2}{\sigma_{m,n}^2}\Delta_2^2(X),
\]
again by Lemma~\ref{lem:1} and~\eqref{eq:deltapm}. By the second part of~\eqref{eq:SnSnm}, for $m$ large enough, say $m\geq m_0$,  $\limsupn|\sigma_{m,n}^2-\sigma_n^2|/\sigma_n^2\leq 1/2$, whence
\equh\label{eq:m0}
\limsupn\frac{\sigma_n^2}{\sigma_{m,n}^2}\leq 2, m\geq m_0.
\eque
Therefore the first part of~\eqref{eq:SnSVn} follows, for $m\geq m_0$.
For the second part, observe that 
\[
|\sigma_{m,n}^2-\sigma_{m,V_n}^2| \leq \var^{1/2}(S_n\topp m-S_{V_n}\topp m)\var^{1/2}(S_n\topp m+S_{V_n}\topp m),
\] and by Lemma~\ref{lem:1} and~\eqref{eq:deltapm},
\[
\sigma_{m,V_n}^2 \leq C\pp{\sum_{j\in V_n}b_{n,j}^2}\Delta_2^2(X\topp m)\leq Cb_n^2\Delta_2^2(X).
\]
Thus, 
\equh\label{eq:2}
\frac{|\sigma_{m,n}^2-\sigma_{m,V_n}^2|}{\sigma_{m,n}^2}\leq C\pp{\frac{\var(S_n\topp m - S_{V_n}\topp m)}{\sigma_{m,n}^2}}^{1/2}\frac{b_n}{\sigma_n}\frac{\sigma_n}{\sigma_{m,n}}.
\eque
By~\eqref{eq:liminf},~\eqref{eq:m0} and  the first part of~\eqref{eq:SnSVn}, for $m\geq m_0$ the second part of~\eqref{eq:SnSVn} follows.
\end{proof}
Now we prove the desired central limit theorem~\eqref{eq:CLT} in three steps. \medskip

\noindent 1) We first show, for $m$ large enough,
\equh\label{eq:CLTmV}
\limn\frac{S_{V_n}\topp m}{\sigma_{m,V_n}}\weakto\calN(0,1).
\eque 
For this purpose, we apply the central limit theorem for $m$-dependent random variables due to \citet{heinrich88asymptotic}. 
We need also 
\equh\label{eq:mVn}
\limsupn \frac{b_n}{\sigma_{m,V_n}}<\infty,
\eque
which follows from~\eqref{eq:liminf} and~\eqref{eq:SnSVn}, for $m$ large enough. For~\eqref{eq:CLTmV},
the required conditions in Heinrich's theorem can be easily verified: for any $m\in\N$ large enough fixed,
\[
\frac1{\sigma_{m,V_n}^2}\sum_{j\in V_n}\esp\pp{b_{n,j}^2 X_j^{(m)2}}\leq \frac{b_n^2}{\sigma_{m,V_n}^2}\var(X_{\vv0}\topp m)\leq C< \infty
\]
for some constant $C$ and $n$ large enough, and for all $\epsilon>0$,
and
\begin{multline*}
\frac{m^{2d}}{\sigma_{m,V_n}^2}\sum_{j\in V_n}\esp\pp{b_{n,j}^2X_j^{(m)2}\indd{|X_j\topp m|\geq \epsilon m^{-2d}\frac{\sigma_{m,V_n}}{|b_{n,j}|}}}\\
\leq \frac{m^{2d}b_n^2}{\sigma_{m,V_n}^2}\esp\pp{X_{\vv0}^{(m)2}\indd{|X_{\vv0}\topp m|\geq \epsilon m^{-2d}/\sup_j\frac{|b_{n,j}|}{\sigma_{m,V_n}}}}\to 0 \mmas n\to\infty
\end{multline*}
where the last step is due to~\eqref{eq:mVn} and the assumption~\eqref{eq:supbnj}.\medskip

\noindent 2) Observe that
\[
\frac{S_n\topp m}{\sigma_{m,n}} = \frac{S_n\topp m-S_{V_n}\topp m}{\sigma_{m,n}} + \frac{S_{V_n}\topp m}{\sigma_{m,V_n}}\frac{\sigma_{m,V_n}}{\sigma_{m,n}}.
\] From~\eqref{eq:SnSVn} and~\eqref{eq:CLTmV}, it follows that for $m$ large enough,
\equh\label{eq:CLTm}
\frac{S_n\topp m}{\sigma_{m,n}}\weakto\calN(0,1). \medskip
\eque
3) At last, to show~\eqref{eq:CLT}, observe that
\[
\frac{S_n}{\sigma_n}-\frac{S_n\topp m}{\sigma_{m,n}} = \frac 1{\sigma_n}(S_n - S_n\topp m) + \frac{\sigma_{m,n}-\sigma_{n}}{\sigma_n\sigma_{m,n}}S_n\topp m.
\]
By Lemma~\ref{lem:bounds}, it follows that
\equh\label{eq:m-approximation}
\limm\limsupn\var\pp{\frac{S_n}{\sigma_n}-\frac{S_n\topp m}{\sigma_{m,n}}} = 0.
\eque
Therefore, applying \citep[Theorem 4.2]{billingsley68convergence} to~\eqref{eq:CLTm} and~\eqref{eq:m-approximation}, we have thus proved~\eqref{eq:CLT}. 
\end{proof}
\begin{Rem}\label{rem:BD}
The same $m_n$-dependent approximation as in \citep[Theorem 3.1]{bierme14invariance} can be applied here, once one notices that
\equh\label{eq:mn}
\limn\frac{\var(S_n-S_n\topp{m_n})}{b_n^2} = 0
\eque
holds (in the same way as in the proof of the first part of~\eqref{eq:SnSnm}) in place of \citep[Eq.~(3.4)]{bierme14invariance} for an appropriately chosen increasing sequence $\indn m$, and the rest of the proof therein can be carried out with minor changes. In order not to introduce too much duplication, we chose to present a different proof. Our result is more general also in the sense that we consider the normalization of $\sigma_n$ instead of $b_n$. 
\end{Rem}
\section{Central limit theorems for set-indexed partial sums}\label{sec:3}
In this section, we consider the case 
\[
S_n\equiv  S_{\Gamma_n} = \sum_{i\in\Gamma_n}X_i
\]
for a sequence of subsets $\indn\Gamma$ of $\Zd$ with the cardinality of subsets $|\Gamma_n| \to\infty$ as $n\to\infty$. This corresponds to the case $b_{n,j} = \indd{j\in\Gamma_n}$ and $b_n = |\Gamma_n|^{1/2}$. Then, in view of Corollary~\ref{coro:1}, it is easy to notice that~\eqref{eq:shift0} is equivalent to
\equh\label{eq:partial}
\limn\frac{|\partial\Gamma_n|}{|\Gamma_n|} = 0,
\eque
where $\partial\Gamma_n = \{i\in\Gamma_n: \exists j\notin\Gamma_n, |i-j|_\infty=1\}$ is the boundary set of $\Gamma_n$. Indeed, if we identify $\Gamma_n$ with an element in $\ell^2(\Zd)$ via $b_{n,j} = \indd{j\in\Gamma_n}$, then for each $q=1,\dots,d$ we have $\snn{T_{e_q}\Gamma_n - \Gamma_n}_{\ell^2}^2 \leq 2|\partial \Gamma_n| \leq \summ m1d\snn{T_{e_m}\Gamma_n-\Gamma_n}_{\ell^2}^2$. We have thus obtained the following.
\begin{Coro}
For a Bernoulli random field with $\Delta_2(X)<\infty$, and a sequence of subsets $\indn\Gamma$ of $\Zd$ satisfying $|\Gamma_n|\to\infty$ and~\eqref{eq:partial},
\equh\label{eq:conv0}
\frac{S_{\Gamma_n}}{|\Gamma_n|^{1/2}}\weakto\calN(0,\sigma^2)
\eque
with $\sigma^2$ given in~\eqref{eq:sigma}.
\end{Coro}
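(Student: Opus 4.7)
The plan is to recognize this as a direct application of Corollary~\ref{coro:1} with the specific choice of coefficients $b_{n,j} = \indd{j\in\Gamma_n}$, which gives $b_n^2 = |\Gamma_n|$ and $S_n = S_{\Gamma_n}$. Since $\Delta_2(X)<\infty$ is already assumed, the only thing to verify is the hypothesis~\eqref{eq:shift0}, namely
\[
\limn \frac{\snn{T_{e_q}\vec b_n - \vec b_n}_{\ell^2}}{b_n} = 0, \qquad q=1,\dots,d.
\]
Once this is established, Corollary~\ref{coro:1} delivers both the variance normalization $\sigma_n/b_n \to \sigma$ and the Gaussian limit~\eqref{eq:conv0}.

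To check~\eqref{eq:shift0} under the boundary condition~\eqref{eq:partial}, I would observe that the $j$-th entry of $T_{e_q}\vec b_n - \vec b_n$ equals $\indd{j+e_q\in\Gamma_n} - \indd{j\in\Gamma_n}$, which takes values in $\{-1,0,1\}$ and is nonzero only when exactly one of the neighbors $j, j+e_q$ lies in $\Gamma_n$. Every such $j$ corresponds to a site of $\partial\Gamma_n$ (either $j$ itself or its translate $j+e_q$), so the $\ell^2$ norm squared satisfies
\[
\snn{T_{e_q}\vec b_n - \vec b_n}_{\ell^2}^2 \leq 2|\partial\Gamma_n|.
\]
Dividing by $b_n^2 = |\Gamma_n|$ and invoking~\eqref{eq:partial} yields~\eqref{eq:shift0} for every $q$.

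There is essentially no obstacle beyond the elementary geometric estimate above; all of the analytic and probabilistic work has been absorbed into Corollary~\ref{coro:1}. For completeness, I would note that~\eqref{eq:supbnj} is automatic here since $\sup_j |b_{n,j}|/b_n = 1/|\Gamma_n|^{1/2} \to 0$ as $|\Gamma_n|\to\infty$ (and in any case it would follow from~\eqref{eq:shift0} by Remark~\ref{rem:redundant}). The proof then concludes in one line: apply Corollary~\ref{coro:1} to obtain $S_{\Gamma_n}/|\Gamma_n|^{1/2} \weakto \calN(0,\sigma^2)$, which is~\eqref{eq:conv0}.
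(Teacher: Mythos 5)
Your proof is correct and follows essentially the same route as the paper: identify $b_{n,j}=\indd{j\in\Gamma_n}$, bound $\snn{T_{e_q}\vec b_n-\vec b_n}_{\ell^2}^2\leq 2|\partial\Gamma_n|$ so that~\eqref{eq:partial} gives~\eqref{eq:shift0}, and invoke Corollary~\ref{coro:1}. The paper additionally records the reverse inequality $|\partial\Gamma_n|\leq\sum_{m=1}^d\snn{T_{e_m}\Gamma_n-\Gamma_n}_{\ell^2}^2$ to note that the two conditions are equivalent, but that is not needed for the conclusion.
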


In the rest of this section, we discuss what happens if we are interested in the convergence of
\equh\label{eq:conv}
\frac{S_n}{\sigma_n}\weakto\calN(0,1).
\eque
This follows from~\eqref{eq:liminf}, by Theorem~\ref{thm:1}. 
 To see the role of the condition~\eqref{eq:liminf}, we provide two examples. First, by Example~\ref{example:1}, we show that condition~\eqref{eq:liminf} cannot be removed: otherwise~\eqref{eq:conv} may no longer hold under Hannan's condition. Second, by Example~\ref{example:2}, we show that the assumption in Corollary~\ref{coro:1} is strictly stronger than~\eqref{eq:liminf}, in the sense that there are examples satisfying~\eqref{eq:liminf}, but the conclusion of Corollary~\ref{coro:1} does not hold. Note also that Example~\ref{example:1} also shows that when $S_n/b_n\weakto\calN(0,\sigma^2)$ with $\sigma^2 = 0$, one should not expect $S_n/\sigma_n$ to converge, without further assumptions.
 
 For the sake of simplicity, both examples are given in one dimension. Let $\{\epsilon_i\}_{i\in\Z}$ be the i.i.d.~random variables that generate the Bernoulli random field~\eqref{eq:Bernoulli}.
\begin{Example}\label{example:1}
Consider $\Gamma_n = \{0,1,\dots,n-1\}$. We construct an example such that $S_n/\sigma_n$ converges to different limits along different subsequences. 

Suppose that there exists a collection of mutually independent random variables $\{\zeta_n\topp k\}_{n\in\Z,k\in\N}$ such that for each $k\in\N$, $\{\zeta\topp k_n\}_{n\in\Z}$ are i.i.d., and for each $n$, $\zeta\topp k_n$ is $\sigma(\epsilon_n)$-measurable. We further assume that $\esp\zeta_n\topp k = 0, \var(\zeta_n\topp k) = 1$. A detailed construction is given at the end.

For coefficients $\{\alpha_k\}_{k\in\N}$ satisfying $\sum_k|\alpha_k|<\infty$ and a sequence of increasing positive integers $\{n_k\}_{k\in\N}$, set
\[
W_n\topp k = \alpha_k(\zeta\topp k_n - \zeta_{n-n_k}\topp k), k\in\N \qmand
X_n = \sif k1 W\topp k_n.
\]
Observe that $P_{0}X_n = \sif k1P_{0}W_n\topp k$, which equals $-\alpha_\ell\xi_{0}\topp \ell$ if $n = n_\ell$ for some $\ell\in\N$, and $0$ otherwise. Thus, $\Delta_2(X) = \sif \ell1 |\alpha_\ell|<\infty$.

Write $S_n = S_{\Gamma_n} = \summ i0{n-1}X_i$ and $S_n(W\topp k) = \summ i0{n-1}W\topp k_i$. So
\[
S_n = \sif k1 S_n(W\topp k).
\] By independence, 
\[
\esp(S_{n_k}(W\topp\ell))^2 = \left\{
\begin{array}{ll}
2n_\ell\alpha_\ell^2 & \ell\leq k\\
2n_k\alpha_\ell^2 & \ell>k
\end{array}
\right.,
\]
and 
\[
\var(S_{n_k}) = \sif\ell1\var(S_{n_k}(W\topp\ell)) = \summ \ell1{k-1}2n_\ell\alpha_\ell^2 + \sif\ell{k+1}2n_k\alpha_\ell^2 + 2n_k\alpha_k^2.
\]
One can choose $\alpha_k$ and $n_k$ so that 
\equh\label{eq:SW}
\var(S_{n_k})\sim \var(S_{n_k}(W\topp k)) = 2n_k\alpha_k^2 \mmas k\to\infty.
\eque
For example, taking $\alpha_k = 2^{-k^2}$ and $n_k = 2^{3k^2}$ $k\in\N$, it  yields $\var(S_{n_k})\sim \var(S_{n_k}(W\topp k)) = 2^{k^2+1}$. 

Now in view of~\eqref{eq:SW}, for our purpose it suffices to choose $\zeta_k$ appropriately such that
\equh\label{eq:Z}
Z_k \defe\frac{S_{n_k}(W\topp k)}{\alpha_k\sqrt{n_k}}
\eque
converge to different limits along even and odd sequences. 

To do so, we now give an explicit construction of $\{\zeta_n\topp k\}_{n\in\Z,k\in\N}$. For the sake of simplicity, consider $(\Omega,\calB,\proba) = ([0,1]^\Z,\calB([0,1])^\Z,{\rm Leb}^\Z)$, and $\epsilon_n(\omega) = \omega_n, \omega\in\Omega$. Here, $T$ is the shift operator although we do not use it explicitly. In this way, for any sequence $\{d_k\}_{k\in\N}$ with $d_k\in[0,1]$, we choose a family of sets $\{A^\pm_k\}_{k\in\N}\subset\calB([0,1])$ such that $A^+_k\cap A^-_k = \emptyset$, $\mu(A^\pm_k) = d_k/2$,
 and set 
\[
\zeta_n\topp k(\omega) = \frac1{\sqrt{d_k}}(\ind_{A_k^+}-\ind_{A_k^-})(\omega_n), n\in\Z,k\in\N.
\]
So for $n\neq n'$, $\zeta_n\topp k$ and $\zeta_{n'}\topp{k'}$ are independent. 
In order that $\{\zeta_n\topp k\}_{n\in\Z,k\in\N}$ satisfy the conditions that we assumed at the beginning, it remains to choose $\{A_k^+,A_k^-\}_{k\in\N}$ such that for each fixed $n$, $\{\zeta_n\topp k \}_{k\in\N}$ are mutually independent. This can be done via a variation of dyadic expansion as follows. First, pick $A_1^+ := (0,d_1/2], A_1^- := (d_1/2,d_1]$. Suppose $A_k^+,A_k^-$ have been selected for $k\in\N$. Then each of $A_k^+,A_k^-$ and $(0,1]\setminus (A_k^+\cup A_k^-)$ can be expressed as a disjoint union of left-open-right-closed intervals, and together these intervals form  a partition of $(0,1]$, say $(0,1]  = \bigcup_{j=1}^{j_k}(a_{j,k},b_{j,k}]$. Now, set 
\begin{align*}
A_{k+1}^+ &:= \bigcup_{j=1}^{j_k} \Bigg(a_{j,k}, a_{j,k}+ (b_{j,k}-a_{j,k})\frac{d_{k+1}}2\Bigg]\\
A_{k+1}^- &:= \bigcup_{j=1}^{j_k} \Bigg(a_{j,k}+ (b_{j,k}-a_{j,k})\frac{d_{k+1}}2,a_{j,k}+ (b_{j,k}-a_{j,k}){d_{k+1}}\Bigg].
\end{align*}
The so-constructed $\{\zeta_n\topp k\}_{n\in\Z,k\in\N}$ are then mutually independent.

Now set $d_{k}=1$ for $k$ even and $d_{k} = 1/n_{k}$ for $k$ odd. For $\{A^\pm_k\}_{k\in\N}$ and $\{\zeta\topp k_n\}_{n,k\in\N}$ described above, when $k$ is even, $Z_k$ in~\eqref{eq:Z} becomes
\[
\frac1{\sqrt{n_k}}\summ i0{n_k-1}(\zeta_i\topp k-\zeta_{i-n_k}\topp k)
\]
which is the normalized sum of $2n_k$ Rademacher random variables, and thus $Z_{2k}\weakto\calN(0,2)$ as $k\to\infty$. At the same time, for $k$ odd, $Z_k$ in~\eqref{eq:Z} becomes 
\[
\sum_{i=0}^{n_k-1}(\zeta_i\topp k-\zeta_{i-n_k}\topp k),
\]
which is the sum of $2n_k$ i.i.d.~random variables with $\proba(\zeta_1\topp k = \pm1) = 1/(2n_k)$ and $\proba(\zeta_1\topp k = 0) = 1-1/n_k$. Clearly as $k\to\infty$ $Z_{2k-1}$ has a non-degenerate limiting distribution which is not Gaussian. So $S_n/\sigma_n$ does not converge.

\end{Example}
\begin{Example}\label{example:2}
Consider $X_i = \epsilon_i-\epsilon_{i-1}$. Observe that $X_i$ and $X_j$ are uncorrelated if $|i-j|\geq 2$. Therefore, this stochastic process satisfies $\Delta_2(X)<\infty$. We now construct a sequence of subsets $\indn\Gamma$ such that $\liminfn\sigma_n/b_n>0$ but $\limn\sigma_n/b_n$ does not exist. 

We construct $\Gamma_n$ iteratively. Set $\Gamma_1 = \{0,1\}$. For $n\in\N$, set $\Gamma_{n+1} = \Gamma_n\cup B_n$ with
\[
B_n = \left\{
\begin{array}{ll}
\{a_n+2,a_n+3,\dots,a_n+2^n+1\} & n \mbox{ even}\\
\{a_n+2,a_n+4,\dots,a_n+2\cdot 2^n\} & n \mbox{ odd}
\end{array}
\right.
\]
with $a_n = \max\{j:j\in\Gamma_n\}$. By construction, $\var(S_{\Gamma_{n+1}}) = \var(S_{\Gamma_n}) + \var(S_{B_n})$, and $\var(B_n) = 2\esp\epsilon_0^2$ for $n$ even, and $2^{n+1}\esp \epsilon_0^2$ for $n$ odd. At the same time, $|\Gamma_n| = 2^n$. It is clear that the desired result follows. 
\end{Example}

\section{Invariance principles for Gaussian random fields}\label{sec:4}
In this section, we present two invariance principles for weighted Bernoulli random fields. Let $\T$ be an index set equipped with a pseudo-metric. Consider random fields in form of
\equh\label{eq:Snt}
S_n(t) = \sum_{j\in\Zd}b_{n,j}(t)X_j, t\in\T.
\eque
Under Hannan's condition on $\{X_j\}_{j\in\Zd}$ and appropriate assumptions on the  coefficients $b_{n,j}(t)$, we shall establish, for an increasing sequence of positive numbers $\indn b$, 
\equh\label{eq:G}
\ccbb{\frac{S_n(t)}{b_n}}_{t\in \T}\weakto\ccbb{\G_t}_{t\in \T}
\eque
where $\G$ is a zero-mean Gaussian process. The space of weak convergence will be specified below.
Most results improve earlier ones \citep{bierme14invariance,elmachkouri13central,wang14invariance}, in the sense that Wu's condition is replaced by Hannan's condition.

We first provide an overview on how to establish~\eqref{eq:G}, illustrating how previous  proofs can be adapted without much changes. To establish such an invariance principle, we proceed as in the standard two-step proof: we first show convergence of finite-dimensional distributions and then tightness. To show the convergence of finite-dimensional distributions, we first remark that marginally, for
$b_{n}(t)\defe\spp{\sum_{j\in\Zd}b_{n,j}^2(t)}^{1/2}$,
one should expect 
\equh\label{eq:CLTt}
\frac{S_n(t)}{b_n(t)}\weakto\calN(0,\sigma^2), \mfa t\in \T
\eque
with $\sigma^2$ as in~\eqref{eq:sigma} as a consequence of Theorem~\ref{thm:1}.
Comparing this with~\eqref{eq:G}, it suggests that $\limn b_n^2(t)/b_n^2 = \var(\G_t)/\sigma^2$.
 Moreover, by Cramer--Wold's device, for the weak convergence to hold, we need to show, for all $\lambda\in\R^m, t\in \T^m, m\in\N$,
\equh\label{eq:fdd}
\frac1{b_n}\summ r1m\lambda_rS_n(t_r)\weakto\calN(0,\Sigma_{\lambda,t}^2) \qmwith \Sigma_{\lambda,t}^2 = \var\pp{\summ r1m\lambda_r\G_{t_r}}.
\eque
The linear combinations of finite-dimensional distributions can again be represented as a linear random field via
\[
\summ r1m\lambda_rS_n(t_r) = \sum_{j\in\Zd} \wt b_{n,j}X_j \qmwith \wt b_{n,j} = \summ r1m\lambda_rb_{n,j}(t_r),
\]
to which one can apply Theorem~\ref{thm:1} again. This is the standard procedure to establish finite-dimensional convergence of linear random fields. In our setup we have thus proved the following as a consequence of Theorem~\ref{thm:1}. Write $\wt b_n = (\sum_j \wt b_{n,j}^2)^{1/2}$.
\begin{Prop}\label{prop:fdd}
Consider random fields in form of~\eqref{eq:Snt} with $\{X_j\}_{j\in\Zd}$ satisfying Hannan's condition $\Delta_2(X)<\infty$. Suppose there exists a sequence of real numbers $\indn b$ such that 
\begin{itemize}
\item[(i)] for all $\lambda\in\R^m, t\in \T^m, m\in\N$, $\{\wt b_{n,j}\}_{j\in\Zd, n\in\N}$ satisfy the assumptions in Theroem~\ref{thm:1} and that $\wt b_n/b_n$ converges to a constant as $n\to\infty$, and 
\item[(ii)] for a zero-mean Gaussian process $\G$,
\equh\label{eq:covariance}
\limn\frac1{b_n^2}\esp(S_n(t)S_n(\tau)) = \esp(\G_t\G_\tau), \mfa t,\tau\in \T.
\eque
\end{itemize}
Then, the convergence of finite-dimensional distributions~\eqref{eq:fdd} holds.
\end{Prop}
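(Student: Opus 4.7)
The plan is to invoke the Cramér--Wold device and thereby reduce finite-dimensional convergence to a single one-dimensional central limit theorem, which is handed off to Theorem~\ref{thm:1}. Fix $m\in\N$, $\lambda\in\R^m$, and $t=(t_1,\dots,t_m)\in\T^m$. With $\wt b_{n,j}\defe\sum_{r=1}^m\lambda_r b_{n,j}(t_r)$, the linear combination $T_n\defe\sum_{r=1}^m\lambda_r S_n(t_r)$ takes the weighted random-field form $T_n=\sum_{j\in\Zd}\wt b_{n,j}X_j$, which is exactly the type of sum covered by Theorem~\ref{thm:1}. Write $\wt\sigma_n^2\defe\var(T_n)$.

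The first step is to establish the variance asymptotics. By bilinearity of covariance and assumption (ii),
\[
\frac{\wt\sigma_n^2}{b_n^2}=\sum_{r,s=1}^m\lambda_r\lambda_s\,\frac{\esp[S_n(t_r)S_n(t_s)]}{b_n^2}\;\longrightarrow\;\sum_{r,s=1}^m\lambda_r\lambda_s\,\esp[\G_{t_r}\G_{t_s}]=\Sigma_{\lambda,t}^2,
\]
the exchange of limit and sum being trivial because the sum is finite.

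The second step splits according to whether $\Sigma_{\lambda,t}^2$ vanishes. If $\Sigma_{\lambda,t}^2=0$, the display above gives $T_n/b_n\to 0$ in $L^2$, hence in distribution, matching the degenerate $\calN(0,0)$. If $\Sigma_{\lambda,t}^2>0$, assumption (i) furnishes all the hypotheses~\eqref{eq:supbnj}--\eqref{eq:liminf} of Theorem~\ref{thm:1} for the weight sequence $\{\wt b_{n,j}\}$, whence Theorem~\ref{thm:1} yields $T_n/\wt\sigma_n\weakto\calN(0,1)$. Combined with the convergence $\wt\sigma_n/b_n\to\Sigma_{\lambda,t}$ from the first step, Slutsky's theorem gives $T_n/b_n\weakto\calN(0,\Sigma_{\lambda,t}^2)$. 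Since $m,\lambda,t$ are arbitrary, the Cramér--Wold device yields~\eqref{eq:fdd}.

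Since the substantive analytic work has already been carried out in Theorem~\ref{thm:1}, I do not anticipate a genuine obstacle; the proof amounts to packaging assumptions (i) and (ii) in the correct form and invoking Cramér--Wold and Slutsky. The only point that requires a line of care is the potential mismatch in the degenerate case $\Sigma_{\lambda,t}^2=0$, where condition~\eqref{eq:liminf} in assumption (i) is compatible only if $\wt b_n/b_n\to 0$; however, Lemma~\ref{lem:1} then gives $\wt\sigma_n^2\le C\wt b_n^2=o(b_n^2)$, so that the direct $L^2$ argument disposes of this case independently of Theorem~\ref{thm:1}.
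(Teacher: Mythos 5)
Your argument is correct and is essentially the paper's own: the paper proves this proposition by exactly the Cram\'er--Wold reduction to the weighted sum $\sum_j \wt b_{n,j}X_j$, an application of Theorem~\ref{thm:1}, and the variance convergence from assumption (ii) to pass from the normalization $\wt\sigma_n$ to $b_n\Sigma_{\lambda,t}$. Your explicit treatment of the degenerate case $\Sigma_{\lambda,t}^2=0$ via the $L^2$ bound is a small point the paper leaves implicit, but it is handled correctly.
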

We highlight that to apply Proposition~\ref{prop:fdd}, the essential work consists of verifying the assumptions on $\wt b_{n,j}$, and computing the covariance~\eqref{eq:covariance}. Both of these two steps are independent from the choice of dependence assumption on $\{X_j\}_{j\in\Zd}$. For invariance principles to be established below, these computations have been carried out in earlier proofs (under stronger assumptions on $\{X_j\}_{j\in\Zd}$) and can be borrowed here without any changes.

For the tightness, the moment inequality~\eqref{eq:weighted} in Lemma~\ref{lem:1} plays an important role. Similar inequalities have been used to establish tightness in the aforementioned work, and the proofs can be adapted with little extra effort in most the cases. See, however, Remark~\ref{rem:H} for an exception.

Below we present two improvements of earlier results. We only sketch the proofs in order not to introduce too much duplications.

\subsection{Invariance principles for self-similar set-indexed Gaussian fields}
Let $\mu$ be a $\sigma$-finite measure on $\Rd$. Consider 
\[
S_n(A) := \sum_{j\in\Zd}b_{n,j}(A)X_j \qmwith b_{n,j}(A) := \mu(nA\cap R_j)^{1/2}, A\in\calA
\]
where $R_j$ is the set of unit cube in $\Rd$ with lower corner $j\in\Zd$, and  $\calA$ is a class of Borel sets of $\Rd$, equipped with pseudo-metric $\rho(A,B) = \mu(A\triangle B)^{1/2}$. For $\mu$ being the Lebesgue measure, this framework has been considered for example in \citep{alexander86uniform,dedecker01exponential,elmachkouri13central}. The generalization to other measures, even for the i.i.d.~$\{X_j\}_{j\in\Zd}$, was first proposed by \citet{bierme14invariance}. In particular, they assume the measure $\mu$ to satisfy the following.
\begin{Assumption}\label{assumption:1}
$\mu$ is a $\sigma$-finite measure on $(\Rd,\calB(\Rd))$, absolutely continuous with respect to the Lebesgue measure, and such that 
\begin{itemize}
\item[(i)] There exists $\beta>0$ such that $\mu(nA) = n^\beta\mu(A)$ for all $n\in\N, A\in\calB(\Rd)$.

\item[(ii)] $\limsup_{\pi(j)\to\infty} \mu(R_j)<\infty$ and 
\[
\lim_{\pi(j)\to\infty}\frac{|\mu(R_{j+e_q}) - \mu(R_j)|}{\mu(R_j)} = 0, q=1,\dots,d
\]
with $\pi(j) = \min_{q=1,\dots,d}|j_q|, j\in\Zd$.
\end{itemize}
\end{Assumption}
Furthermore, they also worked with regular Borel sets $A$, that is, for the boundary set $\partial A$ of $A\subset\Rd$, ${\rm Leb}(\partial A) = 0$. The size and complexity of such classes are normally described via covering numbers: the smallest number of $\rho$-balls with radius $\epsilon$ to cover $\calA$, denoted by  $N(\calA,\rho,\epsilon)$. The entropy numbers are given by $H(\calA,\rho,\epsilon) = \log N(\calA,\rho,\epsilon)$. 

The following result is the counterpart of \citep[Theorem 4.5]{bierme14invariance},  replacing Wu's condition by Hannan's condition. See Remark~\ref{rem:H} below on the comparison with conditions under Wu's condition. For concrete examples on self-similar set-indexed random fields as applications, see \citep{bierme14invariance}. 

\begin{Thm}\label{thm:WIP}
Let $\mu$ be a measure on $\Rd$ satisfying Assumption~\ref{assumption:1}, and let $\calA$ be a class of regular Borel sets of $\Rd$ such that $\mu(A)<\infty$ for all $A\in\calA$. Assume further that one of the following conditions holds.
\begin{itemize}
\item [(i)] There exists $p\geq 2$ such that 
\equh\label{eq:entropy}
\int_0^1N(\calA,\rho,\epsilon)^{1/p}d\epsilon<\infty \qmand \Delta_p<\infty.
\eque
\item[(ii)] 
 There exists $\gamma\in(0,2/d]$, such that
\equh\label{eq:H}
\int_0^1H(\calA,\rho,\epsilon)^{1/\gamma}d\epsilon<\infty \qmand \sup_{p> 2}\frac{\Delta_p(X)}{p^{1/\gamma-d/2}}<\infty.
\eque
\end{itemize}
Then, 
\[
\ccbb{\frac{S_n(A)}{n^{\beta/2}}}_{A\in\calA} \weakto \sigma\{\G(A)\}_{A\in\calA}
\]
in the space of continuous functions on $\calA$ equipped with supremum norm,
 where $\sigma$ is as in~\eqref{eq:sigma} and $\G$ is a zero-mean Gaussian process with covariance $\cov(\G(A),\G(B)) = \mu(A\cap B)$. 
\end{Thm}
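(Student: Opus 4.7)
The plan is the standard two-step scheme: finite-dimensional convergence via Proposition~\ref{prop:fdd}, followed by tightness in the space of continuous functions on $\calA$ via a chaining argument built on Lemma~\ref{lem:1}. As emphasized after Proposition~\ref{prop:fdd}, the coefficient and covariance computations do not depend on the assumption imposed on $\{X_j\}_{j\in\Zd}$ and are identical to those in \citep{bierme14invariance} under Wu's condition, so I would import them with only formal adjustments.

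For the finite-dimensional part, fix $m\in\N$, $\lambda\in\R^m$ and $A_1,\dots,A_m\in\calA$, set $\wt b_{n,j}=\summ r1m\lambda_r b_{n,j}(A_r)$ and $b_n=n^{\beta/2}$. Assumption~\ref{assumption:1} and the regularity of the $A_r$'s yield $\sup_j|\wt b_{n,j}|/b_n\to 0$, and the self-similarity scaling produces both $\wt b_n^2/b_n^2\to\var(\summ r1m\lambda_r\G(A_r))$ and the covariance limit $b_n^{-2}\esp(S_n(A)S_n(B))\to\mu(A\cap B)$. When the former limit is strictly positive, this verifies the $\liminf$ hypothesis of Theorem~\ref{thm:1}, and Proposition~\ref{prop:fdd} delivers~\eqref{eq:fdd}; the degenerate case is trivial.

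For tightness, the starting point is the Lipschitz-type estimate
\[
\sum_{j\in\Zd}\spp{b_{n,j}(A)-b_{n,j}(B)}^2 \leq \sum_{j\in\Zd}\abs{\mu(nA\cap R_j)-\mu(nB\cap R_j)} \leq n^{\beta}\rho(A,B)^2,
\]
which follows from $|\sqrt x-\sqrt y|^2\leq|x-y|$ and the self-similarity $\mu(nC)=n^\beta\mu(C)$. Plugging this into Lemma~\ref{lem:1} yields the increment inequality
\[
\nn{n^{-\beta/2}\spp{S_n(A)-S_n(B)}}_p \leq (p-1)^{d/2}\Delta_p(X)\,\rho(A,B), \qquad p\geq 2,\ A,B\in\calA.
\]
Under hypothesis (i), I would fix $p$ as given in the statement and apply a standard chaining lemma with the entropy integral $\int_0^1 N(\calA,\rho,\epsilon)^{1/p}d\epsilon<\infty$ to bound $\esp\sup_{A,B}|n^{-\beta/2}(S_n(A)-S_n(B))|^p$ uniformly in $n$, yielding equicontinuity and hence tightness in the space of continuous functions on $\calA$.

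The main obstacle lies in case (ii), in which the entropy integral involves $H^{1/\gamma}$ rather than $N^{1/p}$ and a single $p$ no longer suffices. Here I would exploit the growth bound $\Delta_p(X)\leq Cp^{1/\gamma-d/2}$ together with the $(p-1)^{d/2}$ factor in the moment inequality to obtain $\nn{n^{-\beta/2}(S_n(A)-S_n(B))}_p\leq C'p^{1/\gamma}\rho(A,B)$ for all $p\geq 2$, and then feed this family of increment moment bounds into a generic chaining argument controlled by $\int_0^1 H(\calA,\rho,\epsilon)^{1/\gamma}d\epsilon$, exactly as in \citep{bierme14invariance} under Wu's condition. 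This is the only step in which the dimensional blow-up of the constant $C_{p,d}=(p-1)^{d/2}$ from Remark~\ref{rem:Cp} visibly enters the argument, and it is what forces the constraint $\gamma\in(0,2/d]$.
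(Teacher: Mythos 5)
Your proposal is correct and follows essentially the same route as the paper: finite-dimensional convergence via Proposition~\ref{prop:fdd} with the coefficient/covariance verifications imported from \citep{bierme14invariance}, the Lipschitz increment bound $n^{-\beta/2}\snn{S_n(A)-S_n(B)}_p\leq C_{p,d}\Delta_p(X)\rho(A,B)$ from Lemma~\ref{lem:1}, and entropy-based chaining (the paper cites \citep[Theorem~11.6]{ledoux91probability}) for tightness. The only cosmetic difference is in case (ii), where the paper packages your family of $p$-th moment bounds of order $p^{1/\gamma}\rho(A,B)$ into a single Orlicz-norm bound for $\psi_\gamma(y)=\e^{y^\gamma}-1$ before invoking the chaining theorem, which is equivalent to the generic chaining step you describe.
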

\begin{proof}
First, by \citep[Proposition 4.2]{bierme14invariance}, for each $A\in\calA$, $\{b_{n,j}(A)\}_{n,j}$ satisfy the assumptions of Theorem~\ref{thm:1}, and $b_n^2(A) = n^\beta\mu(A)$. This tells the order of normalization should be $n^{\beta/2}$. 

To show the convergence of finite-dimensional distributions, we apply Proposition~\ref{prop:fdd}. The verifications of conditions and the computations of covariance, all based on definitions of $b_{n,j}(A)$ and properties of $\mu$ only, have been carried out in the proof of \citep[Theorem 4.3]{bierme14invariance}.

Now we show the tightness. First consider assumption (i).  As in \citep{bierme14invariance}, we apply \citep[Theorem 11.6]{ledoux91probability}, which states, if for some constant $C>0$, $p\geq 2$,
\equh\label{eq:LT}
\frac1{n^{\beta/2}}\nn{S_n(A)-S_n(B)}_p\leq C \rho(A,B) \mfa n\in\N, A,B\in\calA,
\eque
and the first part of~\eqref{eq:entropy} holds, then
\[
\lim_{\eta\downarrow 0}\sup_{n\in\N}\esp\pp{\sup_{\substack{A,B\in\calA\\\rho(A,B)<\eta}}\frac{|S_n(A)-S_n(B)|}{n^{\beta/2}}} = 0,
\]
which yields the tightness. It remains to remark that~\eqref{eq:LT} follows from~\eqref{eq:weighted}.
 
 Now consider assumption (ii). 
Consider the Young function $\psi_\gamma(y) = \exp({y^\gamma})-1$ and the Orlicz norm $\nn X_{\psi_\gamma} := \inf\{a>0:\esp\psi_\gamma(|X|/a)\leq1\}$. It is well known (e.g.~\citep[Lemma 4]{elmachkouri13central}) that 
\[
\nn X_{\psi_\gamma}\leq C\sup_{p>2}\frac{\nn X_p}{p^{1/\gamma}}. 
\]
Now Lemma~\ref{lem:1} implies for all $n\in\N, A,B\in\calA$,
\begin{multline}\label{eq:4}
\frac1{n^{\beta/2}}\nn{S_n(A)-S_n(B)}_{\psi_\gamma}\\
\leq 
\frac C{n^{\beta/2}}\sup_{p>2}\frac{\nn{S_n(A)-S_n(B)}_p}{p^{1/\gamma}}\leq
C \rho(A,B)\sup_{p>2}\frac{C_{p,d}}{p^{1/\gamma}}\Delta_p(X).
\end{multline}
Recall that $C_{p,d} = (p-1)^{d/2}$. Again by \citep[Theorem 11.6]{ledoux91probability}  the tightness now follows from assumption~\eqref{eq:H}.
\end{proof}

\begin{Rem}\label{rem:H}
Under assumption (i), compared to \citep{bierme14invariance} we simply  replace Wu's condition by Hannan's condition, and thus strictly improve the results. However, the results obtained here in part (ii) are not comparable with those under Wu's condition: when working with Wu's condition, for the second part of~\eqref{eq:H} one can actually assume the strictly weaker assumption 
\equh\label{eq:gammaWu}
\sup_{p>2}\frac{\sum_{j\in\Zd}\snn{X_j-X_j^*}_p}{p^{1/\gamma-1/2}}<\infty.
\eque
To establish this condition, the only difference from the above argument is in the second inequality of~\eqref{eq:4}, where a similar moment inequality as in~\eqref{eq:weighted} is used, except that the constant $C_{p,d}$ is taken as $C_{p,d} = \sqrt{2p}$ for all $d\ge 1$ \citep[Proposition 1]{elmachkouri13central}. See for example \citep{elmachkouri13central,bierme14invariance}. (This constant can actually be replaced by the smaller one $C_{p,d}=\sqrt{p-1}$: it suffices to follow the same proof and replace the constant $\sqrt{2p}$ in \citep[Eq.(10)]{elmachkouri13central} by $\sqrt{p-1}$, due to \citep[Theorem 2.1]{rio09moment}.)

In other words, when replacing Wu's condition by Hannan's condition on the weak dependence of the stationary Bernoulli random fields, the condition on entropy numbers \eqref{eq:H} is strictly strengthened here. This is due to different constants in the moment inequalities of weighted partial sums as in \eqref{eq:weighted} in Lemma~\ref{lem:1} under different conditions: this constant $C_{p,d}$ plays a key role in the second inequality in~\eqref{eq:4}. For our approach here, the constant is essentially due to a Marcinkiewicz--Zygmund type inequality applied  iteratively $d$ times to orthomartingale differences in~\eqref{eq:MZd}; so the power $d$ reflects the dimension. For the moment inequality under Wu's condition in \citep{elmachkouri13central}, a dimension-free argument is applied: essentially a one-dimensional martingale is embedded into the random field, and thus the constant inherits the one from one-dimensional martingale inequalities.

It is not clear to us whether the constant $C_{p,d}$ in Lemma~\ref{lem:1} can be chosen to be independent from $d$. Such a choice would weaken the assumption (ii) in Theorem~\ref{thm:WIP}.

\end{Rem}
\subsection{An invariance principle for fractional Brownian sheet}
Consider a linear random field $\{Y_j\}_{j\in\Zd}$ in form of
\[
Y_j = \sum_{k\in\Zd}a_kX_{j-k}, j\in\Zd,
\]
with $\sum_ka_k^2<\infty$.
Invariance principles for 
\[
S_n(t) = \sum_{\vv 1\leq j\leq nt}Y_j,  t \in[0,1]^d
\]
with $nt = (nt_1,\dots,nt_d)$ have been studied in the literature. Observe that
\[
S_n(t) = \sum_{j\in\Zd}b_{n,j}(t) X_j \qmwith b_{n,j}(t) = \sum_{\vv1\leq i\leq nt}a_{i-j}.
\]
The following theorem generalizes \citep[Theorem 3]{wang14invariance}. In particular, \citep{wang14invariance} considered the case that $\{a_j\}_{j\in\Zd}$ is of the product form: there exist real numbers $\{a_{j_q}\topp q\}_{j_q\in\Z}, q=1,\dots,d$ such that 
\[
a_j = \prodd q1d a_{j_q}\topp q.
\]
Introduce also  $b_{n,j}\topp q = \summ i1n a_{i-j_q}\topp q$ and $b_n(q) = (\sum_{j\in\Z}b_{n,j}^{(q)2})^{1/2}$. Examples on coefficients satisfying the assumption below can be found in \citep[Example 2]{wang14invariance}. 
\begin{Thm}
Suppose there exists $H\in(0,1)^d$ such that 
\[
\limn \frac{b_{\floor{ns}}^2(q)}{b_n^2(q)} = s^{2H_q}, \mfa s\in[0,1], q=1,\dots,d,
\]
and there exists $p$ such that
\[
p\geq 2, p>\max_{q=1,\dots,d}\frac1{H_q} \mand \Delta_p(X)<\infty.
\]
Then, $\{S_n(t)/b_n\}_{t\in[0,1]^d}$ converges weakly in $D([0,1]^d)$ to a fractional Brownian sheet $\G^H$ with Hurst index $H$, a zero-mean Gaussian process with covariance
\[
\cov(\G^H_s,\G^H_t) = \frac1{2^d}\prodd q1d\pp{s_q^{2H_q}+t_q^{2H_q}-|t_q-s_q|^{2H_q}}, s,t\in[0,1]^d.
\]
\end{Thm}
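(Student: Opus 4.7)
The plan is to follow the two-step strategy outlined in the introduction of Section~\ref{sec:4}: establish convergence of finite-dimensional distributions via Proposition~\ref{prop:fdd}, then tightness in $D([0,1]^d)$ via a moment increment bound coming from Lemma~\ref{lem:1}. The product structure $a_j = \prodd q1d a_{j_q}\topp q$ is the key tool, because it makes the coefficients factorize as
\[
b_{n,j}(t) = \prodd q1d b\topp q_{\floor{nt_q},j_q},
\]
so $b_n^2(t) = \prodd q1d b_{\floor{nt_q}}^2(q)$, and the scaling hypothesis yields $b_n^2(t)/b_n^2 \to \prodd q1d t_q^{2H_q}$ where $b_n^2 \defe \prodd q1d b_n^2(q)$.

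For the finite-dimensional convergence, I would apply Proposition~\ref{prop:fdd} to arbitrary linear combinations $\summ r1m\lambda_rS_n(t_r)$. The verification that the combined coefficients $\wt b_{n,j} = \summ r1m\lambda_r b_{n,j}(t_r)$ satisfy the hypotheses~\eqref{eq:supbnj} and~\eqref{eq:liminf} of Theorem~\ref{thm:1}, together with the covariance identity
\[
\limn \frac{1}{b_n^2}\esp\bb{S_n(s)S_n(t)} = \sigma^2\cdot \frac{1}{2^d}\prodd q1d\pp{s_q^{2H_q}+t_q^{2H_q}-|t_q-s_q|^{2H_q}},
\]
was already carried out in \citep{wang14invariance}. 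Those computations rely only on the product structure of the coefficients and on the summability of covariances (Lemma~\ref{lem:2}), both of which are available here under $\Delta_2(X)<\infty$. So this step transfers verbatim.

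For tightness in $D([0,1]^d)$, I would apply a Bickel--Wichura type moment criterion: it suffices to show that for every rectangle $\Delta = \prodd q1d(s_q,t_q]$, the rectangular increment $S_n(\Delta)$ satisfies
\[
\nn{S_n(\Delta)}_p \leq C b_n \prodd q1d (t_q-s_q)^{H_q}.
\]
By Lemma~\ref{lem:1}, this reduces to bounding the $\ell^2$ norm of the corresponding rectangular increment of the coefficient field. The product structure factorizes this norm as
\[
\sum_{j\in\Zd}\bpp{\prodd q1d (b\topp q_{\floor{nt_q},j_q}-b\topp q_{\floor{ns_q},j_q})}^2 = \prodd q1d \sum_{j_q\in\Z}(b\topp q_{\floor{nt_q},j_q}-b\topp q_{\floor{ns_q},j_q})^2.
\]
Each one-dimensional factor is controlled, via the scaling assumption and a Potter-type bound for regularly varying sequences, by $C\, b_n^2(q)(t_q-s_q)^{2H_q}$. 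Combining gives the desired rectangular bound with exponent $H_q$ in coordinate $q$, and the hypothesis $p > \max_q 1/H_q$ ensures $pH_q>1$ for every $q$, which is precisely the Bickel--Wichura condition for tightness in $D([0,1]^d)$.

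The main obstacle is producing a \emph{uniform-in-rectangle} one-dimensional increment bound of order $b_n^2(q)(t_q-s_q)^{2H_q}$ from the merely pointwise scaling hypothesis $b^2_{\floor{ns}}(q)/b_n^2(q)\to s^{2H_q}$; this requires regular variation and a Potter bound argument, but this is exactly what was done in \citep{wang14invariance} and can be imported unchanged. The only substantive change from the Wu-condition proof is to replace the moment inequality used there by our Lemma~\ref{lem:1}, noting that the extra constant $C_{p,d}=(p-1)^{d/2}$ is harmless for fixed $p$. Convergence of finite-dimensional distributions and tightness together yield the stated weak convergence to the fractional Brownian sheet.
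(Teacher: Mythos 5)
Your finite-dimensional step is exactly the paper's: both invoke Proposition~\ref{prop:fdd} and defer the verification of the hypotheses of Theorem~\ref{thm:1} for the combined coefficients $\wt b_{n,j}$, together with the covariance computation, to \citep{wang14invariance}. The tightness step is where you diverge. The paper does not use a Bickel--Wichura rectangular-increment criterion; it uses \citep[Corollary 3]{lavancier05processus_TR}, which for partial-sum-type processes in $D([0,1]^d)$ requires only the one-point bound $\nn{S_n(t)}_p\leq Cb_n\prodd q1d t_q^{\beta/p}$ for some $\beta>1$, obtained directly from Lemma~\ref{lem:1} and a uniform bound $b^2_{\floor{nt_q}}(q)\leq Cb_n^2(q)t_q^{2\beta/p}$ (this is where $p>\max_q 1/H_q$ enters, permitting a choice $\beta\in(1,p\min_q H_q)$). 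That criterion buys two things over your route: one never has to control increments over thin rectangles, and one never has to convert $\prodd q1d(t_q-s_q)^{pH_q}$ into a power of a bona fide finite measure. Your route is viable --- the factorization of the rectangular increment of the coefficient field is correct, and by stationarity of the coefficients $\sum_{j_q}(b\topp q_{\floor{nt_q},j_q}-b\topp q_{\floor{ns_q},j_q})^2=b^2_{\floor{nt_q}-\floor{ns_q}}(q)$, so a Potter bound for the regularly varying sequence $n\mapsto b_n^2(q)$ yields exponent $2H_q-\epsilon$ with $\epsilon$ small enough that $p(H_q-\epsilon/2)>1$ --- but two points need care. First, the bound $\nn{S_n(\Delta)}_p\leq Cb_n\prodd q1d(t_q-s_q)^{H_q}$ is false as literally stated for rectangles with $0<t_q-s_q<1/n$ that straddle a lattice point (there $\floor{nt_q}-\floor{ns_q}=1$ while $(t_q-s_q)^{H_q}$ is arbitrarily small); you must restrict the criterion to grid rectangles or otherwise handle sub-mesh increments, as is standard for discrete partial-sum processes. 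Second, the computation you propose to ``import unchanged'' from \citep{wang14invariance} is, as used in this paper, the verification of the Lavancier-type one-point bound rather than of a rectangular-increment bound, so the citation does not cover your step verbatim, although the underlying regular-variation estimate is indeed the same.
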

\begin{proof}
To show the convergence of finite-dimensional distributions, the conditions in Proposition~\ref{prop:fdd} have been verified as in \citep[proof of Proposition 1]{wang14invariance}; actually, there a different set of conditions in \citep[Definition 1]{wang14invariance} on $b_{n,j}$  were verified. The equivalence between conditions there and ours were pointed out by \citet[Remark after Theorem 3.1]{bierme14invariance} (see also~Remark~\ref{rem:redundant}). 
We also point out that the conditions in~\citep{wang14invariance} were actually redundant: in \citep[Definition 1]{wang14invariance}, Eq.(8) implies Eq.(9) by Cauchy--Schwarz inequality. 

To show the tightness, by \citep[Corollary 3]{lavancier05processus_TR}, it suffices to show, for some $\beta>1, p>0$, 
\equh\label{eq:L}
\nn{S_n(t)}_p\leq Cb_n\prodd q1d t_q^{\beta/p}, t\in[0,1]^d.
\eque
For this purpose, by~\eqref{eq:weighted},
\equh\label{eq:23}
\nn{S_n(t)}_p \leq C \pp{\sum_{j\in\Zd}b_{n,j}^2(t)}^{1/2}\Delta_p(X),
\eque
which could lead to the desired condition~\eqref{eq:L}. This plan can be carried out as in \citep[Proposition 2]{wang14invariance}, with Eq.~(23) therein replaced by~\eqref{eq:23} above and no other changes. We omit the details.
\end{proof}

\subsection*{Acknowledgement} 
We would like to thank J\'er\^ome Dedecker for pointing the reference \citep{rio09moment} to us, and the AE and two anonymous reviewers for several suggestions that helped us improve the paper. Klicnarov\'a's research was partially supported by Czech Science Foundation (project number P201/11/P164). Wang's research was  partially supported by NSA grant H98230-14-1-0318.


\bibliographystyle{apalike}
\bibliography{../include/references}

\end{document}